\title{Higher Specht bases for generalizations of the coinvariant ring}
\author{Maria Gillespie   \\
Colorado State University \\ maria.gillespie@colostate.edu
\and Brendon Rhoades\thanks{Supported by NSF grant DMS-1500838.} \\
UC San Diego \\
bprhoades@ucsd.edu
}
\date{\today}
\newtheorem{theorem}{Theorem}[section]
\newtheorem{proposition}[theorem]{Proposition}
\newtheorem{corollary}[theorem]{Corollary}
\newtheorem{lemma}[theorem]{Lemma}
\newtheorem{conjecture}[theorem]{Conjecture}
\theoremstyle{definition}
\newtheorem{definition}[theorem]{Definition}
\newtheorem{example}[theorem]{Example}
\newcommand{\maj}{{\mathrm {maj}}}
\newcommand{\grFrob}{{\mathrm {grFrob}}}
\newcommand{\des}{{\mathrm {des}}}
\newcommand{\rev}{{\mathrm {rev}}}
\newcommand{\SYT}{{\mathrm {SYT}}}
\newcommand{\B}{\mathcal{B}}
\newcommand{\C}{\mathcal{C}}
\newcommand{\shape}{{\mathrm {shape}}}
\newcommand{\symm}{S}
\newcommand{\CC}{{\mathbb {C}}}
\newcommand{\QQ}{{\mathbb {Q}}}
\newcommand{\CCC}{{\mathcal{C}}}
\newcommand{\BBB}{{\mathcal{B}}}
\newcommand{\xx}{{\mathbf {x}}}
\newcommand{\defn}[1]{\textbf{#1}}
\newcommand{\cc}{\mathrm{cc}}
\newcommand{\cw}{\mathrm{cw}}
\newcommand{\sgn}{\mathrm{sgn}}
\newcommand{\SSYT}{\mathrm{SSYT}}
\newcommand{\Tab}{\mathrm{Tab}}
\begin{document}

\maketitle

\begin{abstract}
The classical coinvariant ring $R_n$ is defined as the quotient of a polynomial ring in $n$ variables by the positive-degree $S_n$-invariants.  It has a known basis that respects the decomposition of $R_n$ into irreducible $S_n$-modules, consisting of the \textit{higher specht polynomials} due to Ariki, Terasoma, and Yamada \cite{ATY}.  

We provide an extension of the higher Specht basis to the generalized coinvariant rings $R_{n,k}$ introduced in \cite{HRS}.  We also give a conjectured higher Specht basis for the Garsia-Procesi modules $R_\mu$, and we provide a proof of the conjecture in the case of two-row partition shapes $\mu$.  We then combine these results to give a 
 higher Specht basis for an infinite subfamily of the modules $R_{n,k,\mu}$ 
recently defined by Griffin \cite{Griffin}, which are a common generalization of $R_{n,k}$ and $R_{\mu}$.  
\end{abstract}

\section{Introduction and Background}
\label{sec:intro}

The \defn{Specht polynomials} provide one of the many ways of directly constructing the irreducible representations of the symmetric group $S_n$.  To define them, recall that a \textit{standard Young tableau} on a partition $\lambda$ of $n$ is a filling of the Young diagram of $\lambda$ with the numbers $1,\ldots,n$ that is increasing across rows and up columns (using the `French' convention for tableaux; see Figure \ref{fig:SYT}).  Given a standard Young tableau $T$, the Specht polynomial $F_T$ is defined as 
$$
F_T = \prod_C \prod_{\substack{i,j \in C,\\i<j}} (x_j - x_i),
$$
where the outer product is over all columns of $T$. For example, if $T$ is the tableau in Figure \ref{fig:SYT}, then $F_T = (x_1 - x_2)(x_1 - x_5)(x_2 - x_5)(x_3 - x_4)(x_6 - x_7)$.  

\begin{figure}
    \centering
    $\young(5,247,136)$
    \caption{A standard Young tableau $T$ of partition shape $\lambda=(3,3,1)$.}
    \label{fig:SYT}
\end{figure}

Given a fixed partition $\lambda$ of $n$, the set of Specht polynomials $\{F_T:T\text{ has shape }\lambda\}$  spans a subspace of $\mathbb{Q}[x_1,\ldots,x_n]$ isomorphic to the irreducible representation $V_\lambda$ of $S_n$ (under the usual $S_n$-action on the variables $x_i$).   Moreover, the polynomials $F_T$ are linearly independent, forming a basis of this representation.  (See \cite{Sagan} for proofs of these facts along with a general overview of symmetric group representation theory and symmetric function theory.)

\subsection{Higher Specht polynomials for the coinvariant ring}

The Specht polynomial construction has been generalized in \cite{ATY} to higher degree copies of $V_\lambda$ appearing in polynomial rings.  In particular, the $S_n$-module structure of the full polynomial ring is easily determined from that of the \defn{coinvariant ring} $$R_n=\mathbb{Q}[x_1,\ldots,x_n]/(e_1,\ldots,e_n).$$ Here $e_1,\ldots,e_n$ are the elementary symmetric functions in $x_1,\ldots,x_n$, defined by $$e_d=e_d(x_1,\ldots,x_n)=\sum_{1\le i_1<\cdots<i_d\le n} x_{i_1}\cdots x_{i_d}.$$

It is known that $R_n$, as an ungraded $S_n$-module, is isomorphic to the regular representation.  Thus each irreducible $S_n$-module $V_\lambda$ appears $\dim V_\lambda$ times, which is precisely the number of standard Young tableaux of shape $\lambda$.  Hence a basis of generalized Specht polynomials for $R_n$ should be indexed by pairs of standard Young tableaux of the same shape.  

To this end, in \cite{ATY} (and more succinctly described in \cite{ATYsmall}), Ariki, Terasoma, and Yamada defined the \defn{higher Specht polynomials} using the well-known \textit{cocharge}\footnote{In \cite{ATY}, the terminology used is `charge', but we use `cocharge' to be consistent with the original notation of Lascoux and Schutzenberger \cite{LascouxSchutzenberger}.} statistic.  We first recall the definition of cocharge for permutations and tableaux here.

\begin{definition}
 Let $\pi = \pi_1 \dots \pi_n$ be a permutation in $\symm_n$.  The \defn{cocharge word} $\cw(\pi) = c_1 \dots c_n$ is defined
 as follows.  Label the $1$ in $\pi$ with the subscript $0$.  Assuming the letter $i$ in $\pi$ has been labeled $j$, assign the letter $i+1$ 
 in $\pi$ the label $j$ if $\pi^{-1}_i < \pi^{-1}_{i+1}$ and $j+1$ if $\pi^{-1}_i > \pi^{-1}_{i+1}$.
Then $\cw(\pi) = c_1 \dots c_n$ is the list of labels, read left-to-right.
\end{definition}

\begin{definition}
If $S$ is a standard tableau, then $\cw(S)$ is the cocharge word of the \textit{reading word} of $S$, formed by concatenating the rows from top to bottom.  
\end{definition}
For example, if $S$ is the tableau at left in Figure \ref{fig:cocharge}, the reading word is $7346125$ so that the cocharge labeling is
$$7_3 \, 3_1 \, 4_1 \, 6_2 \, 1_0 \, 2_0 \, 5_1 $$ and $\cw(S) = 3112001$.  We can also represent $\cw(S)$ as a tableau by replacing the entry $i$ in $S$ with its cocharge label, as shown at right in Figure \ref{fig:cocharge}.
\begin{definition}
For any word $w$ or standard tableau $S$, we define its \defn{cocharge}, written $\cc(w)$ or $\cc(S)$ respectively, to be the sum of the labels in the cocharge word.
\end{definition}

\begin{figure}[t]
    \centering
    $\young(7,346,125)$\hspace{2cm}$\young(3,112,001)$
    \caption{A standard Young tableau $S$ at left, with its cocharge labels shown at right.}
    \label{fig:cocharge}
\end{figure}

Now suppose we have two standard tableaux $S$ and $T$ with the same shape.  Define the monomial $$\xx_T^{\cw(S)}=\prod_{i=1}^n x_i^{\cw(i)}$$ where $\cw(i)$ is the cocharge label in $\cw(S)$ in the same square as $i$ in $T$.  If $T$ is the tableau in Figure \ref{fig:SYT} and $S$ is at left in Figure \ref{fig:cocharge}, then $$\xx_T^{\cw(S)} = x_1^0 x_2^1 x_3^0 x_4^1 x_5^3 x_6^1 x_7^2=x_2x_4x_5^3x_6x_7^2.$$
Finally, define the {\em higher Specht polynomial} $F^S_T$ to be
\begin{equation}
F^S_T := \varepsilon_T \cdot \xx_T^{\cw(S)}
\end{equation}
where $\varepsilon_T \in \QQ[S_n]$ is the \defn{Young idempotent} corresponding to $T$.  That is, $$\varepsilon_T=\sum_{\tau \in C(T)}\sum_{\sigma \in R(T)}\sgn(\tau) \tau \sigma$$ where $C(T)\subseteq S_n$ is the group of \textit{column permutations}  of $T$ (those that send every number to another number in its column in $T$), and $R(T)\subseteq S_n$ is the group generated by row permutations.

\begin{example}
  Suppose $S$ is an SYT of shape $\lambda$ with the property that the numbers $1,\ldots,\lambda_1$ are in the bottom row, the numbers $\lambda_1+1,\lambda_1+2,\ldots,\lambda_1+\lambda_2$ are in the second, and so on.  Then its cocharge indices are $i-1$ in the $i$-th row for all $i$.  In this case, if $T$ is any SYT of shape $\lambda$, then we have $F_{T}^S=F_{T}$ where $F_T$ is the ordinary Specht polynomial defined above. 
\end{example}

If $V$ is a finite-dimensional $S_n$-module, there are unique multiplicities
$c_{\lambda}$ such that $V \cong \bigoplus_{\lambda \vdash n} c_{\lambda} V_{\lambda}. $
The {\em Frobenius character} of $V$ is the symmetric function
$\mathrm{Frob}(V) := \sum_{\lambda} c_{\lambda} s_{\lambda}$ obtained by replacing
each copy of $V_{\lambda}$ with the corresponding Schur function $s_{\lambda}$.
More generally, if $V = \bigoplus_{d \geq 0} V_d$ is a graded $S_n$-module with each
piece $V_d$ finite-dimensional, the {\em graded Frobenius character} of $V$
is $\grFrob(V;q) := \sum_{d \geq 0} \mathrm{Frob}(V) \cdot q^d$.

Let $\SYT(n)$ be the set of all standard Young tableaux with $n$ boxes.
In \cite{ATY}, Ariki, Terasoma, and Yamada proved that the set 
$$
\BBB_n := \{ F_T^S \,:\, S, T \in \SYT(n) \text{ have the same shape} \}
$$
descends to a basis for the classical coinvariant algebra $R_n$. Since $F_T^S$ is obtained by the action of the idempotent $\varepsilon_T$, it follows that the subspace generated by those elements $F_T^S$ with a fixed $T$ is a copy of the irreducible representation $V_\lambda$ where $\lambda=\shape(T)=\shape(S)$ is the partition shape of $S$ and $T$.  (See \cite[page 46]{FultonHarris}.)
As an immediate corollary, one obtains the known fact that the graded Frobenius character of $R_n$ is given by
$$
\grFrob(R_n; q) = \sum_{S \in \SYT(n)} q^{\cc(S)} s_{\mathrm{shape}(S)} 
= \sum_{S \in \SYT(n)} q^{\maj(S)} s_{\mathrm{shape}(S)}.
$$
Here $\maj$ is the major index (see Definition \ref{def:majdes} below).  The second equality follows from the equidistribution of cocharge
and major index on standard tableaux of a given shape (see \cite{Killpatrick}).

Our goal is to extend this setup to several important 
generalizations of the coinvariant ring.  To be precise, we define a \textit{higher Specht basis} of an arbitrary $S_n$-module as follows.

\begin{definition}
  Let $R$ be an $S_n$-module with decomposition $$R=\bigoplus_\lambda c_\lambda V_\lambda$$ into irreducible $S_n$-modules.  Then a \defn{higher Specht basis} of $R$ is a set of elements $\mathcal{B}$ such that there exists a decomposition $\mathcal{B}=\bigcup_\lambda \bigcup_{i=1}^{c_\lambda} \mathcal{B}_{\lambda,i}$ such that the elements of $\mathcal{B}_{\lambda_i}$ are a basis of the $i$-th copy of $V_\lambda$ in the decomposition of $R$.
\end{definition}

We now describe three important generalizations of the coinvariant ring in the following subsections, with the goal of constructing a higher Specht basis for each.

\subsection{The rings $R_{n,k}$}

For positive integers $k \leq n$, Haglund, Rhoades, and Shimozono \cite{HRS}
defined a quotient ring 
\begin{equation}
    R_{n,k} := \QQ[x_1, \dots, x_n]/I_{n,k}
\end{equation}
where $I_{n,k} \subseteq \QQ[x_1, \dots, x_n]$ is the ideal
\begin{equation}
    I_{n,k} := 
    \langle x_1^k, x_2^k, \dots, x_n^k, e_n, e_{n-1}, \dots, e_{n-k+1} \rangle.
\end{equation}
Since the ideal $I_{n,k}$ is homogeneous and $S_n$-stable, the ring $R_{n,k}$
is a graded $S_n$-module. When $k = n$, we recover the classical coinvariant ring, i.e.
$R_{n,n} = R_n$.
As an ungraded $S_n$-module, the ring $R_{n,k}$ is isomorphic \cite{HRS} 
to the permutation
action of $S_n$ on $k$-block ordered set partitions of $\{1, 2, \dots, n\}$.

The {\em Delta Conjecture} of Haglund, Remmel, and Wilson \cite{HRW} depends
on two positive integers $k \leq n$ and predicts the equality of three formal
power series in an infinite set of variables $\xx = (x_1, x_2, \dots )$ and
two additional parameters $q$ and $t$:
\begin{equation}
    \Delta'_{e_{k-1}} e_n = \mathrm{Rise}_{n,k}(\xx;q,t) = \mathrm{Val}_{n,k}(\xx;q,t).
\end{equation}
Here $\Delta'_{e_{k-1}}$ is a Macdonald eigenoperator and $\mathrm{Rise}$ and 
$\mathrm{Val}$ are defined in terms of lattice path combinatorics; see \cite{HRW}
for details.

Although the Delta Conjecture is open in general, 
it is proven when one of the parameters
$q,t$ is set to zero. Combining results of 
\cite{GHRY, HRW, HRS, HRS2, RhoadesOSP, WMultiset} we have
\begin{equation}
    \label{delta-zero}
    \Delta'_{e_{k-1}} e_n \mid_{t = 0} = 
    \mathrm{Rise}_{n,k}(\xx;q,0) = \mathrm{Rise}_{n,k}(\xx;0,q) = 
    \mathrm{Val}_{n,k}(\xx;q,0) = \mathrm{Val}_{n,k}(\xx;0,q).
\end{equation}
If $C_{n,k}(\xx;q)$ is the common symmetric function in Equation~\eqref{delta-zero},
we have \cite{HRS}
\begin{equation}
    \grFrob(R_{n,k};q) = (\mathrm{rev}_q \circ \omega) C_{n,k}(\xx;q),
\end{equation}
where $\mathrm{rev}_q$ reverses the coefficient sequences of polynomials in $q$
and $\omega$ is the symmetric function involution which trades $e_n$ and $h_n$,
so that $R_{n,k}$ gives a representation-theoretic model for the Delta Conjecture
at $t = 0$. 

The rings $R_{n,k}$ also have a geometric interpretation. For $k \leq n$, 
Pawlowski and Rhoades \cite{PR} introduced the variety
\begin{equation}
    X_{n,k} := \{ (\ell_1, \dots, \ell_n) \,:\, 
    \text{$\ell_i$ a line in $\CC^k$ and } \ell_1 + \cdots + \ell_n = \CC^k \}
\end{equation}
of $n$-tuples of $1$-dimensional subspaces of $\CC^k$ which have full span.
They proved \cite{PR} that the rational cohomology of $X_{n,k}$ is presented
by the ring $R_{n,k}.$
Rhoades and Wilson \cite{RW} gave another interpretation of $R_{n,k}$ using an
extension of the Vandermonde determinant to superspace.

\subsection{The rings $R_\mu$}

The \defn{Garsia-Procesi modules} $R_\mu$, 
indexed by partitions $\mu \vdash n$, are another generalization of the coinvariant ring defined by $$R_\mu=\mathbb{Q}[x_1,\ldots,x_n]/I_\mu$$ where we define $I_\mu$ using the notation of Garsia and Procesi \cite{GarsiaProcesi} as follows.   For a subset $S\subseteq\{x_1,\ldots,x_n\}$, define the \textit{partial elementary symmetric functions} $e_r(S)$ to be the elementary symmetric function of degree $r$ in the restricted set of variables $S$.  For instance, $e_2(x_1,x_4,x_5)=x_1x_4+x_1x_5+x_4x_5$.  

Let $\mu'$ be the conjugate partition formed by reflecting $\mu$ about the diagonal, and define \begin{equation}\label{eq:stmu}
    c_t(\mu)=\mu'_1+\cdots+\mu'_t-t
\end{equation} to be the number of squares in the first $t$ columns that lie above the first row.  Then we have\footnote{It is straightforward to verify that the inequality in (\ref{eq:inequality}) is equivalent to the one stated in \cite{GarsiaProcesi}, and we omit the proof for brevity.} \begin{equation}\label{eq:inequality}
    I_\mu=\langle e_r(S)\,:\,c_{n-|S|}(\mu)<r\le |S|\rangle.
\end{equation}
  Note that in the case $\mu=(1^n)$, we recover the coinvariant ring, that is, $R_{(1^n)}=R_n$.  In general, the graded Frobenius character of $R_\mu$ is given by  $$\grFrob(R_\mu;q)=\widetilde{H}_\mu(\xx;q)$$ where $\widetilde{H}_{\mu}(\xx;q)$ are the classical Hall-Littlewood polynomials.  These exhibit a combinatorial formula in terms of the following notions.  
  \begin{definition}\label{def:SSYT}
    A \textbf{semistandard Young tableau} $T$ of shape $\lambda$ is a filling of the Young diagram of $\lambda$ with positive integers such that the rows are weakly increasing left to right and the columns are strictly increasing bottom to top.  The \textbf{content} of a tableau $T$ (or word $w$) is the tuple $(m_1,m_2,\ldots)$ where $m_i$ is the number of times $i$ appears in $T$ (or $w$).  
  \end{definition}
    
    Write $\SSYT(\lambda,\mu)$ for the set of all semistandard Young tableaux of shape $\lambda$ and content $\mu$.  Then it was shown in \cite{LascouxSchutzenberger} that $$\widetilde{H}_\mu(\xx;q)=\sum_{\lambda}\sum_{S\in \SSYT(\lambda,\mu)}q^{\cc(S)}s_{\lambda}$$ where $\cc$ is a generalization of the cocharge statistic that we describe in detail in section \ref{sec:Rmu}. 

 The rings $R_\mu$ also have a geometric interpretation in terms of \textit{Springer fibers}.  Define $\mathcal{B}_\mu$ to be the subvariety of the full flag variety $$\mathrm{Fl}_n=\{0\subseteq V_1\subseteq V_2 \subseteq \cdots \subseteq V_n=\mathbb{C}^n \,:\, \dim(V_i)=i\text{ for all }i\}$$ consisting of the flags fixed by the action of a fixed unipotent element $u$ of $\mathrm{GL}_n(\mathbb{C})$ having Jordan blocks of size $\mu_1,\mu_2,\ldots,\mu_{\ell(\mu)}$.  The space $\mathcal{B}_\mu$ is a fiber of the Springer resolution of the unipotent subvariety of $\mathrm{GL}_n$, and its cohomology ring comes with a graded $S_n$-module structure whose top degree component is precisely the irreducible representation $V_\mu$ \cite{Springer}. The work of \cite{deConciniProcesi} and \cite{Tanisaki} shows that $R_\mu$ is isomorphic to the cohomology ring of the Springer fiber $\mathcal{B}_\mu$, both as a graded ring and as a graded $S_n$-module.

\subsection{The rings $R_{n,k,\mu}$}

In \cite{Griffin}, Griffin introduced a common generalization of $R_\mu$ and $R_{n,k}$.  While Griffin's notation for these generalized modules is $R_{n,\lambda,s}$, here we change the variable $s$ to $k$ and $\lambda$ to $\mu$ and interchange their order to instead write $R_{n,k,\mu}$.  This notation is more compatible with the way we denoted the two known modules above.

Griffin defines the ideal $I_{n,k,\mu}$ generated by: \begin{itemize}
    \item The monomials $x_1^k,\ldots,x_n^k$, and
    \item The partial elementary symmetric functions $e_r(S)$ satisfying $$c_{n-|S|}(\mu)+(n-|\mu|)<r\le |S|,$$ where the notation $c_t(\mu)$ is the same as in Equation (\ref{eq:stmu}).
\end{itemize} 
  Then we have $$R_{n,k,\mu}=\mathbb{C}[x_1,\ldots,x_n]/I_{n,k,\mu}.$$
Notice that if $|\mu|=n$ and $k\ge \ell(\mu)$, then $R_{n,k,\mu}=R_\mu$, and if $\mu=(1^k)$ then $R_{n,k,\mu}=R_{n,k}$.

  In \cite{Griffin}, Griffin gives several combinatorial formulas for the graded Frobenius series of $R_{n,k,\mu}$.  The most relevant of these to our purposes is an expansion in terms of Hall-Littlewood polynomials.  In the following, we write $$H_{\lambda}(x;q):=q^{n(\lambda)}\widetilde{H}(x;q^{-1})=\rev_q(\widetilde{H}(x;q))$$ to denote the `charge' Hall-Littlewood polynomials, where $n(\lambda)=\sum_i \binom{\lambda_i'}{2}$ for any partition $\lambda$.  With this notation, we have

\begin{equation}\label{eqn:FrobRnkmu}
    \grFrob(R_{n,k,\mu};q)=\mathrm{rev}_q\left(\sum_{\substack{\lambda\supseteq \mu \\ \ell(\lambda)\le k \\ |\lambda|=n}} q^{n(\lambda,\mu)}\prod_{i\ge 0} \binom{\lambda_{i}'-\mu_{i+1}'}{\lambda_{i}'-\lambda_{i+1}'}_q H_\lambda(x;q) \right)
\end{equation} 
where $n(\lambda,\mu)=\sum_i \binom{\lambda_i'-\mu_i'}{2}$ and where the notation $\binom{a}{b}_q$ denotes the $q$-binomial coefficient $\prod_{i=0}^{b-1} \frac{1-q^{a-i}}{1-q^{b-i}}$.  The notation $\lambda\supseteq \mu$ indicates that the Young diagram of $\mu$ is contained inside that of $\lambda$.

The modules $R_{n,k,\mu}$ have a geometric interpretation as well, in the limit as $k\to \infty$.   The \textit{Eisenbud-Saltman rank variety} $\overline{O}_{n,\mu}$ is the subvariety of $\mathfrak{gl}_n$ defined by 
$$\overline{O}_{n,\mu}=\{X\in \mathfrak{gl}_n: \dim \ker X^d\ge \mu_1'+\cdots + \mu_d',d=1,\ldots,n\}.$$ In the case that $|\mu|=n$, this coincides with the closure of the variety $O_{\mu}$ of nilpotent matrices with Jordan block type $\mu$.  Setting $R_{n,\mu}$ to be the limiting module of $R_{n,k,\mu}$ as $k\to \infty$, Griffin shows that $R_{n,\mu}$ is the coordinate ring of the scheme theoretic intersection $$\overline{O}_{n,\mu'}\cap \mathfrak{t}$$ where $\mathfrak{t}$ is the Cartan subalgebra of diagonal matrices in $\mathfrak{gl}_n$.  This is a strict generalization of the analogous result for $R_\mu$ and $\overline{O}_{\mu}$, which was an essential step in de Concini and Procesi's work \cite{deConciniProcesi} on the connections to the Springer fibers.

\subsection{Main results}

Our main results are as follows.

\begin{theorem}
\label{thm:Rnk}
Let $k \leq n$ be positive integers. Consider the set of polynomials
$$
\BBB_{n,k} := \{ F_T^S \cdot e_1^{i_1} e_2^{i_2} \cdots e_{n-k}^{i_{n-k}} \},
$$
where $T, S \in \SYT(n)$ have the same shape and
$(i_1, i_2, \dots, i_{n-k})$ is a tuple of $n-k$ nonnegative integers whose sum
is $< k - \des(S)$. The set $\BBB_{n,k}$ descends to a higher Specht basis for $R_{n,k}$.
\end{theorem}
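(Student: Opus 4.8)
The plan is to prove Theorem~\ref{thm:Rnk} in three stages: a dimension/character count, a spanning argument, and a formal deduction of the $S_n$-module structure; the spanning argument is the technical heart.

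\emph{Stage 1: the count.} I would first verify that $\BBB_{n,k}$ has exactly $\dim_\QQ R_{n,k}$ elements, with the correct refinement by degree and by isotypic component. Since $\deg F_T^S = \cc(S)$ and $\deg(e_1^{i_1}\cdots e_{n-k}^{i_{n-k}}) = i_1 + 2i_2 + \cdots + (n-k)i_{n-k}$, the candidate graded Frobenius series of $\mathrm{span}(\BBB_{n,k})$ is
\[
\sum_{\lambda \vdash n} s_\lambda \sum_{S \in \SYT(\lambda)} q^{\cc(S)} \Bigl( \sum_{\substack{\vec i \in \ZZ_{\ge 0}^{n-k} \\ i_1 + \cdots + i_{n-k} < k - \des(S)}} q^{i_1 + 2i_2 + \cdots + (n-k)i_{n-k}} \Bigr).
\]
Grouping the inner sum by $m := i_1 + \cdots + i_{n-k}$ turns it into $\sum_{m < k - \des(S)} q^m \binom{m + (n-k) - 1}{n-k-1}_q$, which by the $q$-hockey-stick identity $\sum_{m=0}^{M-1} q^m \binom{m+r-1}{r-1}_q = \binom{M+r-1}{r}_q$ (with $M = k - \des(S)$, $r = n-k$) collapses to $\binom{n - \des(S) - 1}{n-k}_q$. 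It then remains to identify $\sum_\lambda s_\lambda \sum_{S} q^{\cc(S)}\binom{n-\des(S)-1}{n-k}_q$ with the formula for $\grFrob(R_{n,k};q)$ from \cite{HRS}; this amounts to an equidistribution statement for the pair $(\cc, \des)$ on $\SYT(\lambda)$, which follows from standard tableau combinatorics (cf.\ \cite{Killpatrick}). One checks the boundary cases are consistent: the $q$-binomial vanishes exactly when $\des(S) \ge k$, which is exactly when no admissible tuple $\vec i$ exists (not even $\vec i = \mathbf 0$), and at $k = n$ only $\vec i = \mathbf 0$ is admissible, recovering $\BBB_n$.

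\emph{Stage 2: spanning (the main obstacle).} By Chevalley's theorem $\QQ[\xx]$ is a free module over $\Lambda := \QQ[e_1,\dots,e_n]$, and since $\BBB_n = \{F_T^S\}$ descends to a basis of $R_n = \QQ[\xx]/\Lambda_+\QQ[\xx]$, graded Nakayama upgrades this to a decomposition $\QQ[\xx] = \bigoplus_{S,T} \Lambda\cdot F_T^S$. Reducing modulo the generators $e_n, e_{n-1}, \dots, e_{n-k+1} \in I_{n,k}$ shows that $R_{n,k}$ is spanned by the (infinite) set $\{\,e_1^{i_1}\cdots e_{n-k}^{i_{n-k}} F_T^S\,\}$ over \emph{all} $\vec i \in \ZZ_{\ge 0}^{n-k}$. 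The substance of the theorem is the reduction lemma: if $i_1 + \cdots + i_{n-k} \ge k - \des(S)$, then $e_1^{i_1}\cdots e_{n-k}^{i_{n-k}} F_T^S$ lies in the $\QQ$-span of the admissible members of $\BBB_{n,k}$ modulo $I_{n,k}$. The statistic $\des(S)$ enters naturally here: the cocharge labels of the values $1, 2, \dots, n$ in $S$ form a weakly increasing sequence from $0$ to $\des(S)$, so every exponent in the monomial $\xx_T^{\cw(S)}$ is at most $\des(S)$, and multiplying by a product of $e_i$'s of total $e$-degree $\ge k - \des(S)$ is what forces the relations $x_1^k \equiv \cdots \equiv x_n^k \equiv 0$ into play. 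I see two routes to the reduction lemma: (a) combine the Gröbner-basis analysis of $I_{n,k}$ from \cite{HRS} with the known leading-monomial description of the higher Specht polynomials, so that the leading monomials of $\BBB_{n,k}$ coincide with the standard monomial basis of $R_{n,k}$ and the basis property follows by unitriangularity; or (b) argue directly by downward induction on $i_1+\cdots+i_{n-k}$, at each step rewriting $x_j^k\cdot(\text{lower-order term})$ via $x_j^k, e_{n-j} \in I_{n,k}$ and re-expanding in the $\{F_{T'}^{S'}\}$-basis. Making either route rigorous — in particular controlling the re-expansion so that only admissible elements survive — is where the real work lies.

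\emph{Stage 3: the module structure.} Granting Stages 1 and 2, $\BBB_{n,k}$ is a spanning set of cardinality $\dim_\QQ R_{n,k}$, hence a basis. For a fixed admissible pair $(S, \vec i)$ with $\lambda = \shape(S)$, the subspace $W_{S,\vec i} := \mathrm{span}\{F_T^S\, e_1^{i_1}\cdots e_{n-k}^{i_{n-k}} : T \in \SYT(\lambda)\}$ of $R_{n,k}$ is the image of the Specht module $\mathrm{span}\{F_T^S : T\} \cong V_\lambda \subseteq \QQ[\xx]$ under the $S_n$-equivariant map given by multiplication by the symmetric polynomial $e_1^{i_1}\cdots e_{n-k}^{i_{n-k}}$ followed by the projection $\QQ[\xx] \twoheadrightarrow R_{n,k}$; being a quotient of the irreducible $V_\lambda$ and having dimension $f^\lambda$ (by the basis property), it is isomorphic to $V_\lambda$. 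As $(S, \vec i)$ ranges over admissible pairs, the subspaces $W_{S,\vec i}$ give a direct-sum decomposition of $R_{n,k}$ into irreducibles on which $\BBB_{n,k}$ restricts to a basis of each summand, which is exactly the assertion that $\BBB_{n,k}$ is a higher Specht basis.
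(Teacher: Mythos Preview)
Your Stages~1 and~3 are fine (and Stage~1 is already enough to recover the graded Frobenius formula once a basis is in hand), but Stage~2 is, as you yourself say, not a proof: neither route~(a) nor route~(b) is carried out, and neither is routine.  For route~(a), the leading monomials of the $F_T^S$ under the term order used in \cite{HRS} are not the Garsia--Stanton descent monomials that index the standard monomial basis of $R_{n,k}$, so there is no off-the-shelf unitriangularity; for route~(b), the re-expansion of $x_j^k\cdot(\text{stuff})$ back into the $\{F_{T'}^{S'}\}$-basis of $R_n$ does not obviously respect the bound $\sum i_j < k - \des(S')$.  So as written, Stage~2 is a genuine gap.

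The paper bypasses this difficulty by a different device.  It introduces a one-parameter family of intermediate quotients
\[
R_{n,k,s} := \QQ[\xx]/\langle x_1^k,\dots,x_n^k,\; e_n, e_{n-1},\dots,e_{n-s+1}\rangle,\qquad 0\le s\le k,
\]
with candidate bases $\BBB_{n,k,s}$ built from $e_1,\dots,e_{n-s}$ under the same inequality $\sum i_j < k-\des(S)$, so that $R_{n,k,k}=R_{n,k}$ and $\BBB_{n,k,k}=\BBB_{n,k}$.  A short exact sequence from \cite[Lem.~6.9]{HRS},
\[
0 \longrightarrow R_{n,k-1,s} \xrightarrow{\ \times e_{n-s}\ } R_{n,k,s} \longrightarrow R_{n,k,s+1} \longrightarrow 0,
\]
together with the set-theoretic identity $e_{n-s}\cdot\BBB_{n,k-1,s}\sqcup\BBB_{n,k,s+1}=\BBB_{n,k,s}$, lets one transport the basis property between the various $(k,s)$ and reduce everything to the single case $s=0$.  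There the quotient is simply $R_{n,k,0}=\QQ[\xx]/\langle x_1^k,\dots,x_n^k\rangle$, and your own key observation does all the work: every exponent in $F_T^S$ is at most $\des(S)$, the $e_j$ are squarefree, and $\sum i_j \le k-1-\des(S)$, so every monomial occurring in an element of $\BBB_{n,k,0}$ has all exponents $\le k-1$ and is therefore literally unchanged in the quotient.  Hence it suffices to check that $\BBB_{n,k,0}$ is linearly independent in $\QQ[\xx]$ --- immediate, since it is a subset of the free-module basis $\{F_T^S\cdot e_1^{i_1}\cdots e_n^{i_n}\}$ you already wrote down --- and that $|\BBB_{n,k,0}|=k^n=\dim R_{n,k,0}$, which the paper establishes by a direct RSK-type bijection (Lemma~\ref{enumeration-lemma}) rather than via your $q$-binomial manipulation and the \cite{HRS} character formula.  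In particular the paper never needs your ``reduction lemma'' at all: the passage through the auxiliary parameter $s$ converts the hard spanning problem in $R_{n,k}$ into a trivial one in $R_{n,k,0}$.
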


More details on the notation above, as well as the proof, can be found in Section \ref{sec:Rnk}.  For now, note that since $\symm_n$ acts trivially on any elementary symmetric polynomial, Theorem~\ref{thm:Rnk}
immediately implies \cite[Cor. 6.13]{HRS}:
$$
\grFrob(R_{n,k}; q) = \sum_{S \in \SYT(n)} q^{\maj(S)} \binom{n - \des(S) - 1}{n-k}_q s_{\shape(S)}.
$$

For $R_\mu$, we use a generalization of the cocharge statistic to define \textit{semistandard higher Specht polynomials} $F_T^S$ where $S$ is a semistandard tableau with content $\mu$ and $T$ is a standard Young tableau of the same shape as $S$.   (See Section \ref{sec:Rmu}.)  The polynomial $F_T^S$ is homogeneous of degree $\cc(S)$. 

\begin{conjecture}\label{conj:R-mu}
 Let $\mu\vdash n$. Consider the set of semistandard higher Specht polynomials $$\mathcal{B}_\mu=\{F_T^S\}$$ for which $S$ has content $\mu$ and $T\in \SYT(n)$ has the same shape as $S$.  Then $\mathcal{B}_\mu$ descends to a higher Specht basis of $R_\mu$.
\end{conjecture}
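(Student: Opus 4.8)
\medskip
\noindent\textbf{Sketch of a possible approach toward Conjecture~\ref{conj:R-mu}.}
The strategy mirrors the classical Ariki--Terasoma--Yamada argument and the proof of Theorem~\ref{thm:Rnk}: first check that $\mathcal{B}_\mu$ has the right size, then prove linear independence modulo $I_\mu$, and finally read off the higher Specht structure from the Young symmetrizers. For the count, the Lascoux--Sch\"utzenberger expansion $\grFrob(R_\mu;q)=\widetilde{H}_\mu(\xx;q)=\sum_\lambda \widetilde{K}_{\lambda\mu}(q)\,s_\lambda$ with $\widetilde{K}_{\lambda\mu}(q)=\sum_{S\in\SSYT(\lambda,\mu)} q^{\cc(S)}$ shows that the number of pairs $(S,T)$ with $S\in\SSYT(\lambda,\mu)$, $T\in\SYT(\lambda)$, and $\cc(S)=d$ equals $[q^d]\,\widetilde{K}_{\lambda\mu}(q)\cdot|\SYT(\lambda)|$, which is exactly the dimension of the degree-$d$ piece of the $\lambda$-isotypic component of $R_\mu$. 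Since $F^S_T$ is homogeneous of degree $\cc(S)$ and $F^S_T=\sigma\cdot F^S_{T_0}$ whenever $\sigma T_0=T$, the span of $\{F^S_T:T\in\SYT(\shape(S))\}$ for a fixed $S$ equals the cyclic module $\QQ[S_n]F^S_{T_0}$, which is a quotient of $\QQ[S_n]\varepsilon_{T_0}\cong V_{\shape(S)}$, hence is either $0$ or isomorphic to $V_{\shape(S)}$. Therefore, once $\mathcal{B}_\mu$ is known to be linearly independent in $R_\mu$, the count forces each of these cyclic submodules to be nonzero and forces their sum to be direct, which is precisely the higher Specht basis property.

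So the whole problem reduces to proving that the images of the $F^S_T$ are linearly independent in $R_\mu=\QQ[\xx]/I_\mu$. I would attack this using the Tanisaki presentation of $I_\mu$ together with the explicit monomial basis $\mathrm{GP}_\mu$ of $R_\mu$ produced by Garsia and Procesi \cite{GarsiaProcesi}. Linear independence of $\mathcal{B}_\mu$ is equivalent to the conjunction of: (a) for every $S$, the polynomial $F^S_{T_0}$ does not lie in $I_\mu$ (so that the copy $\QQ[S_n]F^S_{T_0}$ survives intact in $R_\mu$, by irreducibility of $V_{\shape(S)}$); and (b) for every shape $\lambda$ and degree $d$, the surviving copies attached to the various $S\in\SSYT(\lambda,\mu)$ with $\cc(S)=d$ sit in direct sum inside $R_\mu$. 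Statement (a) can be approached by pairing $F^S_{T_0}$ against a well-chosen normal-form coefficient relative to $\mathrm{GP}_\mu$, and (b) amounts to showing that a square matrix of such coefficients, indexed by the relevant pairs $(S,\text{monomial})$, is nonsingular; combinatorially, the nonzero pattern of this matrix should be governed by how the cocharge word $\cw(S)$ of a semistandard $S$ interacts with the Tanisaki relations coming from the statistics $c_t(\mu)$.

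A more structural alternative is to induct via the Garsia--Procesi short exact sequences, which relate $R_\mu$ to modules $R_{\mu^-}$ attached to partitions $\mu^-$ obtained by moving a corner box, and which mirror the recursion satisfied by $\widetilde{H}_\mu$; one would then need a ``branching rule'' showing that multiplication by the appropriate variable and the specialization $x_n\mapsto 0$ carry the semistandard higher Specht polynomials for $R_\mu$ to those of the smaller modules compatibly with the sequences. In either approach the main obstacle is the same: for general $\mu$ we do not have sufficient control over the interaction between the cocharge word of a \emph{semistandard} $S$ and the Tanisaki ideal --- in particular, we cannot yet rule out subtle cancellations among the copies attached to distinct $S$ of the same cocharge, nor directly verify (a). When $\mu$ has two rows, the basis $\mathrm{GP}_{(n-m,m)}$ and the set of possible cocharge words are simple enough that both (a) and (b) (or, alternatively, the compatibility with the exact sequences) can be checked by hand; this is what yields the proof of the conjecture in the two-row case in Section~\ref{sec:Rmu}, and extending the argument to arbitrary $\mu$ is exactly the step that remains open.
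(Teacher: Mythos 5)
The statement you are addressing is a conjecture: the paper itself does not prove it in general, so the honest benchmark is (i) whether your reduction is sound and (ii) how it compares with the partial results the paper does establish. Your reduction is sound. The counting step via $\grFrob(R_\mu;q)=\widetilde{H}_\mu(\xx;q)=\sum_\lambda \widetilde{K}_{\lambda\mu}(q)s_\lambda$ is exactly the numerical match the paper points out, and your module-theoretic observation (each span $\QQ[S_n]\cdot F^S_{T_0}$ is a quotient of the irreducible left ideal $\QQ[S_n]\varepsilon_{T_0}$, hence $0$ or a copy of $V_{\shape(S)}$, so linear independence of $\mathcal{B}_\mu$ plus the count forces the higher Specht structure) is a clean packaging of what the paper does more laboriously in Section~3, where $V^S\cong V^\lambda$ is established via equivariance ($\omega F_T^S=F_{\omega T}^S$), the Garnir relations following Peel, and an ATY-style bilinear form giving triangularity against dual elements $G_T^S$. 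Your second proposed route --- induction through the Garsia--Procesi recursion --- is in fact the paper's actual method for its two-row theorem: it uses the decomposition $R_\mu\cong\bigoplus_i x_n^{i-1}R_{\mu^{(i)}}$ to build a known basis $\mathcal{C}_\mu=\bigcup_i x_n^{i-1}\mathcal{B}_{\mu^{(i)}}$ and then shows the transition matrix from $\mathcal{B}_\mu^{(d)}$ to $\mathcal{C}_\mu^{(d)}$ is (block) lower triangular with nonzero diagonal.

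The genuine gap, measured against a proof of the conjecture, is the one you name yourself: linear independence of $\mathcal{B}_\mu$ in $R_\mu$ for general $\mu$ is not established by either of your two strategies, and neither (a)/(b) nor the ``branching rule'' is carried out even in outline beyond the assertion that it should work. This is the same gap the paper leaves open, so your proposal is not wrong, but be careful not to overstate the two-row case as something that ``can be checked by hand'' without exhibiting the check: the paper's two-row proof requires real work, namely the explicit identity $F_T^S=\alpha\, x_nF_{T''}^{S''}+\beta\sum_j F_{T'_j}^{S'}$ with $\alpha=\frac{d}{n-2d+1}+d$, $\beta=\frac{n-d}{n-2d+1}$, straightening the nonstandard terms $F_{T'_j}^{S'}$ via the Garnir relations, and verifying that the residual expression $P\cdot(x_{b_d}+\cdots+x_{b_{n-d}}+x_n)$ lies in $I_\mu$ as a signed sum of Tanisaki generators $e_d(X)$ with $|X|=n-d+1$. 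Also note that the paper's own computations (e.g.\ $\mu=(3,1,1)$) show the transition matrix is generally only ``almost lower triangular'' beyond two rows, so any extension of the recursive strategy must handle nontrivial column operations, i.e.\ exactly the ``subtle cancellations among copies attached to distinct $S$ of the same cocharge'' that you flag as the obstruction.
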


  Numerically, the conjectured basis matches what we would expect based on the graded Frobenius character of $R_\mu$ (as computed in \cite{LascouxSchutzenberger}), which is given by $$\grFrob(R_\mu;q)=\sum_{\lambda}\sum_{S\in \SSYT(\lambda,\mu)}q^{\cc(S)}s_{\lambda}.$$ 
Our main progress towards proving this conjecture is the following.
\begin{theorem}\label{thm:two-rows}
  Conjecture \ref{conj:R-mu} holds when $\mu=(k,n-k)$ has two rows.
\end{theorem}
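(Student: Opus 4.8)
The plan is to exploit that for a two-row shape $\mu=(k,n-k)$ the module $R_\mu$ is multiplicity-free, which lets one reduce the whole statement to a fact about ordinary Specht polynomials. I would first collect some reductions. Because $\mu$ has two parts, every semistandard tableau of content $\mu$ has at most two rows, and for each $j$ with $0\le j\le n-k$ there is exactly one such tableau $S^{(j)}$, of shape $(n-j,j)$; so $K_{\lambda\mu}\in\{0,1\}$, and since the reading word of $S^{(j)}$ is $2^{j}1^{k}2^{\,n-k-j}$ (cocharge word $1^{j}0^{\,n-j}$, so $\cc(S^{(j)})=j$) the Hall--Littlewood formula recalled above gives $\grFrob(R_\mu;q)=\widetilde H_{(k,n-k)}(\xx;q)=\sum_{j=0}^{n-k}q^{j}s_{(n-j,j)}$: each $V_{(n-j,j)}$ occurs in $R_\mu$ exactly once, concentrated in degree $j$. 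Next I would note that the cocharge tableau of $S^{(j)}$ carries the label $i-1$ in row $i$, so $\xx_T^{\cw(S^{(j)})}=\prod_{a\in\text{row }2\text{ of }T}x_a$; this monomial is fixed by the row group $R(T)$, so $\varepsilon_T$ acts on it through the column antisymmetrizer alone, and $F^{S^{(j)}}_T$ is a nonzero scalar multiple of the Specht polynomial $F_T$ of the introduction (equivalently, $S^{(j)}$ has the same cocharge tableau as the superstandard filling of $(n-j,j)$, and one invokes the introductory Example). Thus, up to nonzero scalars, $\mathcal B_\mu=\bigsqcup_{j=0}^{n-k}\{F_T:\shape(T)=(n-j,j)\}$, which has $\sum_{j=0}^{n-k}f^{(n-j,j)}=\binom nk=\dim R_\mu$ elements; so the theorem is equivalent to linear independence of $\mathcal B_\mu$ in $R_\mu$, and since its degree-$j$ part spans the Specht module $W_j:=\langle F_T:\shape(T)=(n-j,j)\rangle$ and the $W_j$ sit in distinct degrees, it suffices to show each irreducible $W_j$ injects into $R_\mu$.

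The heart of the argument is then a multiplicity count that I would run as follows. Fix $0\le j\le n-k$. By the Frobenius formula above $(R_\mu)_j\cong V_{(n-j,j)}$, so the degree-$j$ part $(I_\mu)_j$ of the defining ideal is an $S_n$-submodule of $\QQ[x_1,\dots,x_n]_j=\mathrm{Sym}^j(\QQ^n)$ whose quotient is $\cong V_{(n-j,j)}$. Now $V_{(n-j,j)}$ occurs in $\mathrm{Sym}^j(\QQ^n)$ with multiplicity exactly $1$: grouping the degree-$j$ monomials into $S_n$-orbits writes $\mathrm{Sym}^j(\QQ^n)$ as a sum of Young permutation modules $M^{\gamma(\beta)}$ over partitions $\beta\vdash j$ with at most $n$ parts, where $\gamma(\beta)$ is the sorted tuple $(n-\ell(\beta),m_1(\beta),m_2(\beta),\dots)$; and $K_{(n-j,j),\gamma(\beta)}\neq 0$ forces $(n-j,j)\unrhd\gamma(\beta)$, hence $n-\ell(\beta)\le n-j$, i.e. $\beta=(1^j)$, which contributes $K_{(n-j,j),(n-j,j)}=1$. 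Since the quotient $\mathrm{Sym}^j(\QQ^n)/(I_\mu)_j\cong V_{(n-j,j)}$ already uses up that single copy, $(I_\mu)_j$ contains no copy of $V_{(n-j,j)}$ at all, hence does not contain the irreducible submodule $W_j$; therefore $W_j\cap(I_\mu)_j=0$ and $W_j$ maps isomorphically onto $(R_\mu)_j$. This proves $\mathcal B_\mu$ is a basis of $R_\mu$, and since for each $j$ the set $\{F^{S^{(j)}}_T:T\in\SYT(n-j,j)\}$ is exactly a basis of the unique copy of $V_{(n-j,j)}$, it is a higher Specht basis.

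The main obstacle is the cocharge computation in the first reduction: one must verify, against the semistandard cocharge conventions to be set up in Section~\ref{sec:Rmu}, that $F^{S^{(j)}}_T$ really is a scalar multiple of the classical Specht polynomial $F_T$. Once that identification is secure, the rest is driven entirely by multiplicity-freeness of two-row $R_\mu$ and the observation that $V_{(n-j,j)}$ appears in $\mathrm{Sym}^j(\QQ^n)$ only once, so the defining ideal has no room to absorb the Specht copy. I expect this argument to be essentially tight to the two-row case: for three or more rows $R_\mu$ has higher multiplicities and $\mathrm{Sym}^j(\QQ^n)$ may contain several copies of the relevant irreducible, so the ``nowhere to hide'' mechanism breaks down --- consistent with Conjecture~\ref{conj:R-mu} being open in general.
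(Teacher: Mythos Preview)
Your argument is correct and takes a genuinely different route from the paper. The paper proceeds by induction on $|\mu|$ via the Garsia--Procesi recursion $R_\mu\cong\bigoplus_i x_n^{i-1}R_{\mu^{(i)}}$: assuming $\mathcal B_{\mu^{(1)}}$ and $\mathcal B_{\mu^{(2)}}$ are bases, they write each $F_T^S\in\mathcal B_\mu^{(d)}$ explicitly as a combination of elements of $\mathcal C_\mu^{(d)}=\mathcal B_{\mu^{(1)}}^{(d)}\cup x_n\mathcal B_{\mu^{(2)}}^{(d-1)}$ and check that the transition matrix is lower triangular with nonzero diagonal in last-letter order, using the product formula $F_T^S=d!(n-d)!\prod_i(x_{t_i}-x_{b_i})$ and a direct verification that the discrepancy lies in $I_\mu$. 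You instead exploit that two-row $R_\mu$ is multiplicity-free and reduce to the single representation-theoretic fact that $V_{(n-j,j)}$ occurs exactly once in $\mathrm{Sym}^j(\QQ^n)$, so $(I_\mu)_j$ contains no copy of it and the classical Specht module $W_j$ must inject. Your cocharge and scalar-multiple identifications are correct (indeed the paper records the same product formula as a lemma), and your orbit count is sound: since $j\le n-k\le n/2$, one has $n-\ell(\beta)\ge n-j\ge j\ge m_i(\beta)$, so $n-\ell(\beta)$ really is the largest part of $\gamma(\beta)$, making the dominance step valid.

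Your approach is shorter and more conceptual, but it leans essentially on multiplicity-freeness and on the known graded Frobenius character of $R_\mu$; as you note, it has no hope beyond two rows. The paper's inductive transition-matrix method is more laborious here but is designed as a template: the authors go on to show computationally that for larger $\mu$ the analogous transition matrix is ``almost lower triangular,'' suggesting their framework might extend toward the full conjecture, whereas yours cannot.
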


Finally, we combine these two results to give a higher Specht basis for an infinite family of the modules $R_{n,k,\mu}$, as follows.

\begin{theorem}\label{thm:Rnkmu}
  Suppose $\mu$ is the one-row partition $(n-1)$.  Consider the set of polynomials $$\mathcal{B}_{n,k,(n-1)}=\{F_T^S\cdot e_1^i\}$$ where $F_T^S\in \mathcal{B}_{(n-1,1)}$ is a semistandard higher Specht polynomial for the shape $(n-1,1)$, and $i<k-\des(S)$.  Then $\mathcal{B}_{n,k,(n-1)}$ descends to a higher Specht basis of $R_{n,k,(n-1)}$. 
\end{theorem}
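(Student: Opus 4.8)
The plan is to deduce Theorem~\ref{thm:Rnkmu} from Theorem~\ref{thm:two-rows} by exploiting the fact that, for the very special partition $\mu = (n-1)$, the partial elementary symmetric generators of $I_{n,k,(n-1)}$ degenerate completely. Indeed, since $x_i x_j = e_2(\{x_i,x_j\})$ already occurs among the generators while every $e_r(S)$ with $r \ge 2$ is a sum of squarefree monomials each divisible by some $x_i x_j$, one has $I_{n,k,(n-1)} = \langle x_i x_j : i \neq j \rangle + \langle x_1^k, \dots, x_n^k \rangle$. Thus $R_{n,k,(n-1)}$ is a quotient of the ring $A := \mathbb{C}[x_1,\dots,x_n]/\langle x_i x_j : i \neq j\rangle$, and for the same reason $R_{(n-1,1)} = A/\langle e_1 \rangle$ (the only remaining generator of $I_{(n-1,1)}$ is $e_1$, since $e_r = e_r(x_1,\dots,x_n) \in \langle x_i x_j\rangle$ for $r \ge 2$).

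Next I would observe that $A$ is the Stanley--Reisner ring of $n$ disjoint vertices --- equivalently, the coordinate ring of the union of the $n$ coordinate axes in affine $n$-space --- hence Cohen--Macaulay of Krull dimension $1$, with $\mathbb{C}$-basis $\{1\} \cup \{x_\ell^d : 1 \le \ell \le n,\ d \ge 1\}$. Since $A/\langle e_1\rangle = R_{(n-1,1)}$ is finite-dimensional, $e_1$ is a homogeneous system of parameters for $A$, so $A$ is a free $\mathbb{C}[e_1]$-module whose rank basis may be taken to be any lift of a homogeneous $\mathbb{C}$-basis of $R_{(n-1,1)}$; by Theorem~\ref{thm:two-rows}, applied to the two-row partition $(n-1,1)$, we may take this to be $\mathcal{B}_{(n-1,1)}$. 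Hence $\{F_T^S \cdot e_1^i : F_T^S \in \mathcal{B}_{(n-1,1)},\ i \ge 0\}$ is a $\mathbb{C}$-basis of $A$. Here $\mathcal{B}_{(n-1,1)}$ is tiny: the only shapes admitting a semistandard tableau of content $(n-1,1)$ are $(n)$ and $(n-1,1)$, each carrying a unique such tableau $S_0$, $S_1$, and a direct computation from the definition of the higher Specht polynomial gives $F_{T_0}^{S_0} = n!$ (a nonzero constant) and $F_T^{S_1} = (n-1)!\,(x_{b(T)} - x_1)$, where $b(T)$ is the top entry of $T \in \SYT((n-1,1))$; from the definitions of Section~\ref{sec:Rmu} one checks $\des(S_0) = 0$ and $\des(S_1) = 1$.

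It then remains to pass from $A$ to $R_{n,k,(n-1)} = A/\langle x_1^k, \dots, x_n^k\rangle$ and to check that the prescribed finite subset descends to a basis. A one-line computation in $A$ gives $\langle x_1^k, \dots, x_n^k \rangle_A = \mathrm{span}_{\mathbb{C}}\{x_\ell^e : 1 \le \ell \le n,\ e \ge k\}$, so $R_{n,k,(n-1)}$ has Hilbert series $1 + n(q + \cdots + q^{k-1})$ and dimension $1 + n(k-1)$, while the candidate set has $k + (n-1)(k-1) = 1 + n(k-1)$ elements; so it suffices to prove the candidate set spans. Working degree by degree and using $e_1^j \equiv \sum_\ell x_\ell^j \pmod{\langle x_i x_j\rangle}$ together with the explicit forms above, the degree-$d$ members of the candidate set are, up to nonzero scalars, $\sum_\ell x_\ell^d$ (coming from $F_{T_0}^{S_0} e_1^d$) and $x_b^d - x_1^d$ for $b = 2, \dots, n$ (coming from $F_T^{S_1} e_1^{d-1}$) whenever $1 \le d \le k-1$; these $n$ elements span the degree-$d$ component $\langle x_1^d, \dots, x_n^d\rangle$ of $R_{n,k,(n-1)}$, the constant $F_{T_0}^{S_0} = n!$ handles degree $0$, and the candidate set has no members in degrees $\ge k$. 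Finally the higher Specht property is automatic: $e_1$ is $S_n$-invariant, so each $F_{T_0}^{S_0} e_1^i$ spans a copy of the trivial module $V_{(n)}$, and for each fixed $i$ the set $\{F_T^{S_1} e_1^i : T \in \SYT((n-1,1))\}$ is a basis of a copy of the standard module $V_{(n-1,1)}$.

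The structural reductions above are routine, so the genuine content is twofold: the appeal to Theorem~\ref{thm:two-rows}, which supplies the module basis of $A$ and is where the real difficulty is hidden; and the bookkeeping that the exponent bounds $i < k - \des(S)$ are exactly the ones making the degree-by-degree count of the candidate set agree with the Hilbert series of $R_{n,k,(n-1)}$. Because $\mu = (n-1)$ makes $A$ so explicit, I do not expect the second point to cause trouble, but one should take care to confirm $\des(S_0) = 0$, $\des(S_1) = 1$ and the Cohen--Macaulayness of $A$ (here elementary, from the coordinate-axes description) rather than leaving them implicit; one could alternatively sidestep the free-module language entirely and verify the spanning claim by hand directly in $R_{n,k,(n-1)}$.
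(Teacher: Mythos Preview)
Your argument is correct, but it follows a genuinely different route from the paper's.

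The paper proves Theorem~\ref{thm:Rnkmu} by induction on $k$, using Griffin's short exact sequence
\[
0 \to R_{n,k,(n-1)} \xrightarrow{\,\cdot e_1\,} R_{n,k+1,(n-1)} \to R_{n,k+1,(n-1,1)} \to 0,
\]
observing that the right-hand term is the Garsia--Procesi module $R_{(n-1,1)}$ (so Theorem~\ref{thm:two-rows} supplies its basis), and then reading off that $e_1\cdot\mathcal{B}_{n,k,(n-1)} \cup \mathcal{B}_{(n-1,1)} = \mathcal{B}_{n,k+1,(n-1)}$. This exactly parallels the proof of Theorem~\ref{thm:Rnk}.

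You instead exploit the accident that for $\mu=(n-1)$ the Tanisaki-type generators collapse to $\langle x_ix_j : i\neq j\rangle$, so $R_{n,k,(n-1)}$ is a quotient of the one-dimensional Stanley--Reisner ring $A$ of $n$ disjoint points, with explicit monomial basis $\{1\}\cup\{x_\ell^d\}$. You then compute the candidate basis elements explicitly in $A$ (using $e_1^d \equiv \sum_\ell x_\ell^d$) and verify spanning and the dimension count by hand. This is more elementary --- in fact, as you note at the end, the spanning argument together with the Hilbert series calculation makes the Cohen--Macaulay/free-module step and even the appeal to Theorem~\ref{thm:two-rows} unnecessary for this particular $\mu$. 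What the paper's approach buys is structural uniformity: it mirrors the $R_{n,k}$ argument and uses Griffin's exact sequence, which is the natural tool for attacking more general $R_{n,k,\mu}$. Your approach buys self-containment and transparency for this specific case, but it relies entirely on the degeneration $I_{n,k,(n-1)} = \langle x_ix_j\rangle + \langle x_\ell^k\rangle$, which has no analogue for other $\mu$.
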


We can see numerically that the basis of Theorem \ref{thm:Rnkmu} matches what we would expect from the graded Frobenius character.  In particular, setting $\mu=(n-1)$ in Equation (\ref{eqn:FrobRnkmu}), the summation has two terms, with $\lambda=(n)$ and $\lambda=(n-1,1)$.  In both cases we have $n(\lambda,\mu)=0$, and the only nontrivial $q$-binomial coefficient occurs at $i=0$ with $\lambda=(n-1,1)$.  Hence
\begin{align*}
    \grFrob(R_{n,k,(n-1)};q) &=\rev_q\left[ H_{(n)}(x;q)+\binom{k-1}{k-2}_{q} H_{(n-1,1)}(x;q)\right] \\
    &=\rev_q\left[H_{(n)}(x;q)+(1+q+\cdots+q^{k-2})H_{(n-1,1)}(x;q)\right] \\
    &=q^{k-1}\left[H_{(n)}(x;q^{-1})+(1+q^{-1}+q^{-2}+\cdots +q^{2-k})H_{(n-1,1)}(x;q^{-1})\right] \\
    &=q^{k-1}\widetilde{H}_{(n)}(x;q)+(1+q+q^2+\cdots+q^{k-2})\widetilde{H}_{(n-1,1)}(x;q)
\end{align*}

where the third equality above follows from the fact that $H_{(n)}(x;q)$ has degree $0$ in $q$ and $H_{(n-1,1)}$ has degree $1$, so that the entire polynomial has degree $k-1$.  Finally, note that $\widetilde{H}_{(n)}(x;q)$ and $\widetilde{H}_{(n-1,1)}(x;q)$ are the Frobenius series of the Garsia-Procesi modules $R_{(n)}$ and $R_{(n-1,1)}$ respectively.  It follows from Theorem \ref{thm:two-rows} that the basis $\mathcal{B}_{n,k,(n-1)}$ of Theorem \ref{thm:Rnkmu} gives the correct number of irreducible $S_n$ representations in each degree.

It is our hope that these methods can be generalized to construct a higher Specht basis for $R_{n,k,\mu}$ of the form $\{F_T^S\cdot e_1^{i_1}\cdots e_{n-|\mu|}^{i_{n-|\mu|}}\}$, where the polynomials $F_T^S$ are semistandard higher Specht polynomials for various partitions $\lambda\vdash n$ such that $\lambda\supset \mu$, and where there is an appropriate bound on the exponents $i_j$.  As it is, one current limitation is that for any partition $\mu$ with $|\mu|<n$ and $\mu\neq (n-1)$, there exists a  partition $\lambda$ of $n$ containing $\mu$ that has at least three rows.  This exceeds the two-row condition of Theorem \ref{thm:two-rows}.

\subsection{Outline}

The remainder of the paper is organized as follows.  In Section 2 we prove Theorem \ref{thm:Rnk}.  In Section 3 we prove Theorem \ref{thm:two-rows} and provide additional evidence and work towards Conjecture \ref{conj:R-mu}.  Finally, we prove Theorem \ref{thm:Rnkmu} in section 4. 

\subsection{Acknowledgments}

We thank Nicolas Thi\'{e}ry for several illuminating conversations about higher Specht polynomials, and for sharing some helpful code.  Thanks also to Sean Griffin for many discussions regarding his new $S_n$-modules $R_{n,k,\mu}$.

We used the open source mathematics software Sage extensively in testing conjectures and generating examples for this work.

\section{Higher Specht bases for $R_{n,k}$}\label{sec:Rnk}

We will obtain our new basis for $R_{n,k}$ by multiplying the 
higher Specht polynomials $F_T^S$ (for standard tableaux $T$ and $S$ of the same shape) by appropriate elementary symmetric polynomials.
Before stating our basis, we recall some notions from 
commutative algebra.

A sequence of polynomials $f_1, f_2, \dots, f_r$ in the ring 
$\QQ[x_1, \dots, x_n]$ is a \defn{regular sequence} if for all $1 \leq j \leq r$,
the endomorphism
\begin{equation*}
    \QQ[x_1, \ldots, x_n] / ( f_1, \ldots, f_{j-1})
    \xrightarrow{\times f_j}
    \QQ[x_1, \ldots, x_n] / ( f_1, \ldots, f_{j-1})
\end{equation*}
on the quotient ring $\QQ[x_1, \ldots, x_n] / (f_1, \ldots, f_{j-1})$
induced by multiplication by $f_j$ is injective.
The longest possible length of a regular sequence $f_1, \ldots, f_r$ in 
$\QQ[x_1, \ldots, x_n]$ is $r = n$. The elementary symmetric polynomials
$e_1, e_2, \ldots, e_n$ constitute one such length $n$ regular sequence.

Let $f_1, \ldots, f_n$ be any length $n$ regular sequence in $\QQ[x_1, \ldots x_n]$
such that the $f_j$ are homogeneous. Then the quotient
$\QQ[x_1, \dots, x_n]/( f_1, \dots, f_n)$ is graded and if 
$\mathcal{B}$ is a family of homogeneous polynomials which descends to a $\QQ$-basis
of $\QQ[x_1, \dots, x_n]/(f_1, \dots, f_n)$, then
the infinite set of polynomials
\begin{equation*}
    \{ g \cdot f_1^{i_1} f_2^{i_2} \cdots f_n^{i_n} \,:\, g \in \mathcal{B} \}
\end{equation*}
is a basis of the full polynomial ring $\QQ[x_1, \dots, x_n]$.
In order to describe our basis of $R_{n,k}$ we need one more definition.

\begin{definition}\label{def:majdes}
  A \defn{descent} of a standard Young tableau $S$ is an entry $i$ which occurs in a lower row than $i+1$ (written in French notation).  The \defn{major index} of $S$, written $\maj(S)$, is the sum of the descents of $S$, and we write $\des(S)$ for the number of descents.
\end{definition}

For instance, if $S$ is the tableau at left in Figure \ref{fig:SYT}, then $\maj(S)=1+3+4+6=14$ and $\des(S)=4$.

We now restate Theorem \ref{thm:Rnk} here for the reader's convenience.
{
\renewcommand{\thetheorem}{\ref{thm:Rnk}}
\begin{theorem}
Let $k \leq n$ be positive integers. Consider the set 
$$
\BBB_{n,k} := \{ F_T^S \cdot e_1^{i_1} e_2^{i_2} \cdots e_{n-k}^{i_{n-k}} \},
$$
consisting of all polynomials of the form $F_T^S \cdot e_1^{i_1} e_2^{i_2} \cdots e_{n-k}^{i_{n-k}}$ where $S, T \in \SYT(n)$ have the same shape and
$(i_1, i_2, \dots, i_{n-k})$ is a tuple of $n-k$ nonnegative integers whose sum
is $< k - \des(S)$. The set $\BBB_{n,k}$ descends to a higher Specht basis for $R_{n,k}$.
\end{theorem}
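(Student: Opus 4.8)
The plan is to derive Theorem~\ref{thm:Rnk} from the Ariki--Terasoma--Yamada theorem by a free-module reduction, after which all that remains is a single combinatorial statement about how the monomial ideal $(x_1^k,\dots,x_n^k)$ interacts with the higher Specht polynomials. First I would reduce ``$\BBB_{n,k}$ is a higher Specht basis'' to the plain statement ``$\BBB_{n,k}$ descends to a $\QQ$-vector space basis of $R_{n,k}$.'' Fix $S\in\SYT(\lambda)$ and a tuple $(i_1,\dots,i_{n-k})$. Since $S_n$ permutes the polynomials $F_T^S$ (for this fixed $S$) according to its action on the tableau $T$, the set $\{F_T^S:T\in\SYT(\lambda)\}$ spans an $S_n$-submodule of $\QQ[\xx]$ isomorphic to $V_\lambda$, by the ATY construction; as $e_1^{i_1}\cdots e_{n-k}^{i_{n-k}}$ is $S_n$-invariant, the span in $R_{n,k}$ of $\{F_T^S\cdot e_1^{i_1}\cdots e_{n-k}^{i_{n-k}}:T\in\SYT(\lambda)\}$ is an $S_n$-equivariant image of $V_\lambda$, hence is $0$ or a copy of $V_\lambda$. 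If $\BBB_{n,k}$ descends to a basis of $R_{n,k}$, then each such span is nonzero, so it is a copy of $V_\lambda$ whose basis is the corresponding block of $\BBB_{n,k}$; these blocks partition $\BBB_{n,k}$, and the resulting copies of irreducibles give a direct sum decomposition of $R_{n,k}$ because their union is a basis. That is exactly a higher Specht basis.

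Next I would set up the commutative algebra. The ATY theorem states that $\{F_T^S\cdot e_1^{a_1}\cdots e_n^{a_n}\}$ is a $\QQ$-basis of $\QQ[\xx]$, so $\QQ[\xx]$ is a free $\QQ[e_1,\dots,e_n]$-module with basis $\{F_T^S\}$. Tensoring over $\QQ[e_1,\dots,e_n]$ with $\QQ[e_1,\dots,e_n]/(e_{n-k+1},\dots,e_n)\cong\QQ[e_1,\dots,e_{n-k}]$ shows $Q:=\QQ[\xx]/(e_{n-k+1},\dots,e_n)$ is a free $\QQ[e_1,\dots,e_{n-k}]$-module with basis $\{F_T^S\}$, whence $\{F_T^S\cdot e_1^{i_1}\cdots e_{n-k}^{i_{n-k}}:i_j\ge 0\}$ is a $\QQ$-basis of $Q$. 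Since $I_{n,k}=\langle x_1^k,\dots,x_n^k,e_{n-k+1},\dots,e_n\rangle$, we have $R_{n,k}=Q/(x_1^k,\dots,x_n^k)$, and the task becomes to show that the sub-collection $\BBB_{n,k}$ of this $Q$-basis, namely the elements with $i_1+\cdots+i_{n-k}<k-\des(S)$, descends to a basis of $Q/(x_1^k,\dots,x_n^k)$. Writing $\mathfrak{m}=(e_1,\dots,e_{n-k})$ and using the free-module structure of $Q$, this is equivalent to the identity, inside $Q$,
$$(x_1^k,\dots,x_n^k)=\bigoplus_{S,T}\mathfrak{m}^{k-\des(S)}\cdot F_T^S,$$
where $\mathfrak{m}^{d}$ is the span of the monomials $e_1^{i_1}\cdots e_{n-k}^{i_{n-k}}$ with $i_1+\cdots+i_{n-k}\ge d$, and $\mathfrak{m}^{d}:=\QQ[e_1,\dots,e_{n-k}]$ when $d\le 0$. (When $k=n$ there are no $e_j$'s, the condition reads $0<n-\des(S)$, which always holds, and one recovers the ATY basis.)

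I expect this last identity to be the main obstacle. The ideal $(x_1^k,\dots,x_n^k)$ is not generated by a regular sequence in $Q$, so there is no formal reason it should respect the free-basis decomposition; the content of the theorem is precisely that it does, with the truncation exponent governed by $\des(S)$. The inclusion $\supseteq$ yields that $\BBB_{n,k}$ spans $R_{n,k}$, and $\subseteq$ yields linear independence. I would attack these via leading terms: each $F_T^S$ has an explicit leading monomial (a descent monomial of Garsia--Stanton type, twisted according to $S$), and $I_{n,k}$ possesses an explicit Gr\"obner basis and standard-monomial basis for $R_{n,k}$ by \cite{HRS}, so one tries to match the leading monomials of $\BBB_{n,k}$ against that standard basis; an alternative is induction on $n$ and $k$ through short exact sequences of the type relating $R_{n,k}$, $R_{n-1,k}$, and $R_{n-1,k-1}$ in \cite{HRS}, checking compatibility of $\BBB_{n,k}$ with them. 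A dimension count via the enumerative identity $\sum_{\lambda}f^\lambda\sum_{S\in\SYT(\lambda)}\binom{n-\des(S)-1}{n-k}=\dim_\QQ R_{n,k}$ (the right side being $|\OP_{n,k}|$ by \cite{HRS}) then promotes ``$\BBB_{n,k}$ spans'' to ``$\BBB_{n,k}$ is a basis'', and, tracked through the cocharge grading, the same bookkeeping recovers the graded Frobenius formula of \cite{HRS} stated after the theorem.
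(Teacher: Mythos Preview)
Your setup through the free $\QQ[e_1,\dots,e_{n-k}]$-module structure of $Q=\QQ[\xx]/(e_{n-k+1},\dots,e_n)$ is correct, and you have isolated the crux: understanding how the ideal $(x_1^k,\dots,x_n^k)$ sits inside that free module. But the proposal stops at precisely that point. Neither suggested attack is carried out; the short exact sequences actually invoked from \cite{HRS} keep $n$ fixed rather than varying it as you propose, and the Gr\"obner route would require identifying the leading monomials of the $F_T^S$ under a term order compatible with the known standard monomial basis of $R_{n,k}$, which is delicate because $F_T^S$ is both row-symmetrized and column-antisymmetrized. (A minor point: your displayed ``equivalence'' claims more than is needed, namely that the ideal decomposes along the free-module summands; sufficiency is clear but necessity is not.)

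The paper sidesteps your obstacle by proving a stronger statement. For $0\le s\le k$ set $I_{n,k,s}=(x_1^k,\dots,x_n^k,e_n,\dots,e_{n-s+1})$, $R_{n,k,s}=\QQ[\xx]/I_{n,k,s}$, and $\BBB_{n,k,s}=\{F_T^S\, e_1^{i_1}\cdots e_{n-s}^{i_{n-s}}:\sum_j i_j<k-\des(S)\}$; one shows that $\BBB_{n,k,s}$ is a basis of $R_{n,k,s}$ for every $s$, the case $s=k$ being the theorem. The short exact sequence $0\to R_{n,k-1,s}\xrightarrow{\,\times e_{n-s}\,} R_{n,k,s}\to R_{n,k,s+1}\to 0$ of \cite[Lem.~6.9]{HRS}, combined with the set identity $\BBB_{n,k,s}=e_{n-s}\cdot\BBB_{n,k-1,s}\sqcup\BBB_{n,k,s+1}$, reduces inductively to $s=0$, where $R_{n,k,0}=\QQ[\xx]/(x_1^k,\dots,x_n^k)$. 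The idea you are missing is then elementary: the largest variable exponent appearing in $F_T^S$ is $\des(S)$ (immediate from the cocharge word), and each $e_j$ is squarefree, so every monomial occurring in any element of $\BBB_{n,k,0}$ has all exponents at most $\des(S)+\sum_j i_j<k$. Hence $\BBB_{n,k,0}$ lives entirely in the span of monomials with every exponent below $k$, which injects into $R_{n,k,0}$; linear independence there thus reduces to linear independence in $\QQ[\xx]$, which holds because $\BBB_{n,k,0}$ is a subset of the full ATY basis $\{F_T^S\, e_1^{i_1}\cdots e_n^{i_n}\}$ of the polynomial ring. The enumeration $|\BBB_{n,k,0}|=k^n=\dim R_{n,k,0}$ (Lemma~\ref{enumeration-lemma}) finishes the base case.
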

\addtocounter{theorem}{-1}
}

We first prove an enumerative lemma which will help us in the proof of Theorem \ref{thm:Rnk}.

\begin{lemma}
  \label{enumeration-lemma}
  Let $n$ and $k$ be positive integers.
  There are exactly $k^n$  tuples $(S,T,i_1, \dots, i_n)$ where $S$ and $T$ are standard Young tableaux of 
  the same shape with $n$ boxes and $i_1, \dots, i_n$ are nonnegative integers with 
  $i_1 + \cdots + i_n < k - des(S)$.
\end{lemma}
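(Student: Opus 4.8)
The plan is to partition the tuples $(S,T,i_1,\dots,i_n)$ according to the common shape of $S$ and $T$ and, more usefully, according to the value $d := \des(S)$, count each class by elementary means, and then recognize the resulting sum as an instance of Worpitzky's identity for Eulerian numbers.

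\medskip

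First I would count the pairs $(S,T)$ of standard Young tableaux of the same shape (with $n$ boxes) for which $\des(S) = d$. This number is the Eulerian number $A_n(d)$, i.e.\ the number of permutations of $\{1,\dots,n\}$ with exactly $d$ descents. Indeed, the Robinson--Schensted correspondence is a bijection between $S_n$ and the set of pairs $(P,Q) \in \SYT(n)^2$ with $\shape(P)=\shape(Q)$, and under it the descent set of the permutation coincides with the descent set of the recording tableau; taking $(P,Q) = (T,S)$ therefore gives $\des(w)=\des(S)$. (Equivalently, and all that is really needed, the classical identity $\sum_{\shape(S)=\shape(T)} q^{\des(S)} = \sum_{w\in S_n} q^{\des(w)}$ expresses the Eulerian polynomial.) Second, for a fixed such pair I would count tuples $(i_1,\dots,i_n)$ of nonnegative integers with $i_1+\cdots+i_n < k-d$, i.e.\ $i_1+\cdots+i_n \le k-d-1$; a stars-and-bars argument gives $\binom{n+k-1-d}{n}$ of them, with the convention $\binom{m}{n}=0$ for $0\le m<n$. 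In particular this count is $0$ exactly when $d\ge k$, consistent with the constraint $i_1+\cdots+i_n<k-d$ being unsatisfiable there. Combining, the quantity to evaluate is
\[
\sum_{d\ge 0} A_n(d)\,\binom{n+k-1-d}{n}.
\]

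\medskip

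Finally I would use the symmetry $A_n(d)=A_n(n-1-d)$ and substitute $e = n-1-d$, turning the sum into $\sum_{e\ge 0} A_n(e)\binom{k+e}{n}$. This is exactly the right-hand side of Worpitzky's identity $x^n = \sum_{e\ge 0} A_n(e)\binom{x+e}{n}$ specialized at $x=k$, so the sum equals $k^n$, completing the proof.

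\medskip

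I do not expect a serious obstacle: the only points demanding care are matching the descent conventions among the permutation, its recording tableau, and Definition~\ref{def:majdes} so that the Eulerian count is correct, and treating the degenerate range $d\ge k$ via the vanishing of the binomial coefficient rather than as a separate case. As an alternative to invoking Worpitzky's identity, one can instead construct a direct bijection from the tuples $(S,T,i_1,\dots,i_n)$ to words in $\{1,\dots,k\}^n$: standardizing a word extracts a permutation (hence, through Robinson--Schensted, the pair $(S,T)$) together with its weakly increasing ``value vector,'' which repackages as the exponents $i_j$; but for brevity I would present the Worpitzky route as the main argument.
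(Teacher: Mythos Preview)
Your proof is correct, but it takes a different route from the paper's. The paper argues bijectively: it applies RSK to match words in $\{1,\dots,k\}^n$ with pairs $(R,T)$ where $R$ is semistandard with entries at most $k$ and $T$ is standard of the same shape, and then builds an explicit bijection between such $R$ and pairs $(S,(i_1,\dots,i_n))$ by \emph{destandardizing} $S$ to the unique semistandard tableau with the same standardization and then using the $i_j$ to successively raise blocks of entries. Your argument instead stratifies by $d=\des(S)$, counts the pairs $(S,T)$ with $\des(S)=d$ via the descent-preserving property of RSK as the Eulerian number $A_n(d)$, counts the $(i_1,\dots,i_n)$ by stars-and-bars, and then sums with Worpitzky's identity. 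Your closing remark about ``standardizing a word'' and reading off the exponents $i_j$ from the value vector is essentially the paper's bijection, so you have in fact identified both approaches; you simply chose to foreground the enumerative one. The paper's route has the advantage of yielding an explicit bijection (and avoids any need to match descent conventions against an external identity), while yours is shorter to write down once Worpitzky is granted and makes the role of the Eulerian numbers transparent.
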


\begin{proof}
 The \textit{Robinson-Schensted-Knuth correspondence} gives a bijection between words $w\in \{1,2,\ldots,k\}^n$ and pairs $(R,T)$ of Young tableaux with $n$ boxes having the same shape, such that $R$ is semistandard with entries in $\{1,2,\ldots,k\}$ and $T$ is standard.  
 
 Since there are $k^n$ words $w\in \{1,2,\ldots,k\}^n$, it suffices to give a bijection between  the tuples $(S,T,i_1,\ldots,i_n)$ in question and the pairs $(R,T)$ described above. Given a standard Young tableau $S$, define the \textit{destandardization} of $S$, denoted $S'$ as follows.  If $d_1<d_2<\cdots <d_n$ are the descents of $S$, replace $1,\ldots,d_1$ with $1$, replace $d_1+1,\ldots,d_2$ with $2$, etc.  See Figure \ref{fig:destandardization} for an example.
 
 \begin{figure}
     \centering
     $\young(6,378,1245)$\hspace{1.5cm} $\young(3,233,1122)$\hspace{1.5cm}$\young(6,377,1235)$
     \caption{A standard Young tableau $S$ (at left) and its destandardization $S'$ (middle).  The tableau $R$ defined from $S'$ in the proof of Lemma \ref{enumeration-lemma} arising from the tuple $(i_1,\ldots,i_8)=(0,1,0,0,2,0,1,0)$ is shown at right.}
     \label{fig:destandardization}
 \end{figure}
  Note that $S'$ is semistandard by the definition of a descent, and $S$ can be uniquely reconstructed from $S'$.  Notice also that the largest entry of $S'$ is $\des(S)+1$.
  
  Now let $i_1,\ldots,i_n$ be such that $i_1+\cdots + i_n<k-\des(S)$.  Let $a_1\le \cdots \le a_n$ be the entries of $S'$ in order (breaking ties by the corresponding ordering in $S$).  Then define $R$ by increasing each of the numbers $a_{1},\ldots,a_n$ by $i_1$, then increasing $a_2,\ldots,a_n$ by $i_2$, then increasing $a_3,\ldots ,a_n$ by $i_3$, and so on. Because $i_1+\cdots +i_n<k-\des(S)$, the result $R$ has largest entry at most $k$. This process is reversible, and so the proof is complete.
\end{proof}

We now prove Theorem \ref{thm:Rnk}.

\begin{proof} (of Theorem \ref{thm:Rnk})
It will be convenient to consider a broader family of quotients $R_{n,k,s} = \QQ[x_1, \dots, x_n]/I_{n,k,s}$ 
defined for $s \leq k \leq n$. Here
\begin{equation}
I_{n,k,s} := \langle x_1^k, x_2^k, \dots, x_n^k, e_n, e_{n-1}, \dots, e_{n-s+1} \rangle.
\end{equation}
In particular, we have $I_{n,k,k} = I_{n,k}$ and $R_{n,k,k} = R_{n,k}$.
We allow $s$ to be zero, in which case no $e$'s appear in our ideal at all. 
However, we assume that $n, k$ are positive.

Consider the following extended set 
\begin{equation}
\BBB_{n,k,s} :=  \{ F_T^S \cdot e_1^{i_1} e_2^{i_2} \cdots e_{n-s}^{i_{n-s}} \},
\end{equation}
consisting of all polynomials $F_T^S \cdot e_1^{i_1} e_2^{i_2} \cdots e_{n-s}^{i_{n-s}}$ for which $S, T \in \SYT(n)$ have the same shape and $(i_1, i_2, \dots, i_{n-s})$ is a list of $n-s$ nonnegative integers
whose sum is $< k - \des(S)$.

We claim that $\BBB_{n,k,s}$ descends to a basis for $R_{n,k,s}$ for all $n, k, s$.  This is stronger than the statement
of the Theorem.  When $n = k = s$, the quotient $R_{n,n,n}$ is the classical coinvariant algebra and the fact that $\BBB_{n,n,n}$
descends to a basis for $R_{n}$ is precisely the result of \cite{ATY}.

To begin, the proof of \cite[Lem. 6.9]{HRS} gives a short exact sequence
\begin{equation}\label{eq:SES}
0 \rightarrow R_{n,k-1,s} \rightarrow R_{n,k,s} \rightarrow R_{n,k,s+1} \rightarrow 0,
\end{equation}
where the first map is induced by multiplication by $e_{n-s}$ and the second map is the canonical projection.
(In fact, the \cite[Lem. 6.9]{HRS} 
is only proven in the case where $s > 0$; the case $s = 0$ has the same proof after observing that
$\dim(R_{n,k,0}) = k^n$, so that the dimensions of the rings on either end add up to the dimension of the ring in the 
middle in this case.)

By exactness, if $\BBB$ descends to a basis for $R_{n,k-1,s}$ and if $\CCC$ descends 
to a basis for $R_{n,k,s+1}$ the disjoint union
\begin{equation}
\{ e_{n-s} \cdot f \in \BBB \} \sqcup \{ g \,:\, g \in \CCC \}
\end{equation}
descends to a basis for $R_{n,k,s}$.   Using this property and the fact that 
\begin{equation}
\{e_{n-s} \cdot f \in \BBB_{n,k-1,s} \} \sqcup \{ g \,:\, g \in \BBB_{n,k,s+1} \} = \BBB_{n,k,s},
\end{equation}
we are inductively reduced to proving the result when $s = 0$.  That is, it remains to show that 
$\BBB_{n,k,0}$ descends to a basis for $R_{n,k,0}$.

By definition, we have
\begin{equation}
\BBB_{n,k,0} =  \{ F_T^S \cdot e_1^{i_1} e_2^{i_2} \cdots e_{n}^{i_{n}} \},
\end{equation}
where $S, T \in \SYT(n)$ have the same shape and $(i_1, \dots, i_n)$ is a sequence of nonnegative integers 
whose sum is $< k - \des(S)$.  By the definition of the cocharge word $\cw(S)$, the largest possible exponent appearing in
the monomial
$\xx_T^{\cw(S)}$ or the polynomial $F_T^S = \varepsilon_T \cdot \xx_T^{\cw(S)}$ is $\des(S)$.
Since elementary symmetric polynomials are sums of squarefree monomials, we see that the largest possible exponent
appearing in a polynomial in $\BBB_{n,k,0}$ is $k-1$.  Since 
\begin{equation}
R_{n,k,0} = \QQ[x_1, \dots, x_n]/\langle x_1^k, \dots, x_n^k\rangle
\end{equation}
and $|\BBB_{n,k,0}| = k^n = \dim(R_{n,k,0})$, 
(where the first equality uses Lemma~\ref{enumeration-lemma})
we conclude that 
$\BBB_{n,k,0}$ descends to a basis for $R_{n,k,0}$ if and only if $\BBB_{n,k,0}$ is linearly independent
in the full polynomial ring $\QQ[x_1, \dots, x_n]$.

We finish the proof by showing that  
$\BBB_{n,k,0}$ is linearly independent in $\QQ[x_1, \dots, x_n]$.
To do this, we apply the main result of \cite{ATY}, that the set 
$$\BBB_n = \{ F_T^S \,:\, \text{$S,T \in \SYT(n)$ have the same shape} \}$$ descends to a basis for the 
coinvariant ring $R_n$.  Since the ideal defining $R_n$ is cut out by the regular sequence $e_1, \dots, e_n$,
we know that the set 
\begin{equation}
\{ F_T^S \cdot e_1^{i_1} \cdots e_n^{i_n} \,:\, 
\text{$S, T \in \SYT(n)$ have the same shape and $i_1, \dots, i_n \geq 0$} \}
\end{equation}
is a basis of the full polynomial ring $\QQ[x_1, \dots, x_n]$.
Since it is a subset of this basis, the set $\BBB_{n,k,0}$ is linearly independent in $\QQ[x_1, \dots, x_n]$, as desired.
\end{proof}

\section{Higher Specht bases for $R_\mu$}\label{sec:Rmu}

We now give a conjectured generalization of the higher Specht basis to the Garsia-Procesi modules $R_\mu$, and prove it in the case that $\mu$ has at most two rows.  We first recall the generalization of cocharge, defined in \cite{LascouxSchutzenberger}, to words whose \textit{content} (Definition \ref{def:SSYT}) is a partition.  Throughout this section we assume $w$ is a word with partition content $\mu$.

For an entry $w_j$ of $w$ and a positive integer $k$, define the \defn{cyclically previous} $k$ before $w_j$, denoted  $\mathrm{cprev}(k,w_j)$,  to be the rightmost $k$ cyclically to the left of $w_j$ in $w$.  That is, it is the rightmost $k$ to the left of $w_j$ if such a $k$ exists, or the rightmost $k$ in $w$ otherwise.

\begin{definition}
  Let $w_{i_1}=1$ be the rightmost $1$ in $w$, and recursively define $i_2,\ldots,i_{\ell(\mu)}$ by $$w_{i_{j+1}}=\mathrm{cprev}(j+1,w_{i_j}).$$  We call the subword $w^{(1)}$ consisting of the entries $w_{i_j}$ the \textbf{first standard subword} of $w$.
\end{definition}

\begin{definition}
  The \textbf{standard subword decomposition} of $w$ is obtained by setting $w^{(1)}$ to be the first standard subword of $w$, and recursively defining $w^{(i)}$, for $i>1$, to be the first standard subword of the entries of $w$ not in $w^{(1)},\ldots,w^{(i-1)}$.
\end{definition}

\begin{definition}
The \textbf{cocharge} of $w$ is $$\cc(w)=\sum_{i}\cc(w^{(i)})$$ where $w^{(1)},w^{(2)},\ldots,w^{(\mu_1)}$ is its standard subword decomposition. The \defn{cocharge word} $\cw(w)$ is defined as the labeling on $w$ given by labeling the letters of $w^{(i)}$ with its cocharge word $\cw(w^{(i)})$ for each $i$.  

 For a semistandard Young tableau $S$ having reading word $w$, we define $$\cc(S)=\cc(w).$$  For a square $s$ in the diagram of $S$, we write $\cw_S(s)$ for the cocharge word label of the corresponding letter of $w$.
\end{definition}

\begin{example}
  The semistandard Young tableau $$S=\young(4,22334,11123)$$ has reading word $w=42233411123$.  If we label the first standard subword $w^{(1)}$ (shown in boldface below) with its cocharge labeling as subscripts, we get: $$4_{\phantom{0}} 2_{\phantom{0}} \mathbf{2}_1 3_{\phantom{0}} 3_{\phantom{0}} \mathbf{4}_2 1_{\phantom{0}} 1_{\phantom{0}} \mathbf{1}_0 2_{\phantom{0}} \mathbf{3}_1.$$ Then we label $w^{(2)}$ to obtain: $$\mathbf{4}_2 \mathbf{2}_1 2_1 3_{\phantom{0}} \mathbf{3}_1 4_2 1_{\phantom{0}} \mathbf{1}_0 1_0 2_{\phantom{0}} 3_1.$$ We finally label $w^{(3)}$ to obtain:
  $$4_2 2_1 2_1 \mathbf{3}_1 3_1 4_2 \mathbf{1}_0 1_0 1_0 \mathbf{2}_0 3_1.$$ It follows that $\cw(w)=2111200001$ and $\cc(S)=\cc(w)=8$.
\end{example}

We can now define the conjectured basis for $R_\mu$. 

\begin{definition}
  Let $(S,T)$ be a pair of Young tableaux of the same shape $\lambda\vdash n$ where $S$ is semistandard and has content $\mu$ and $T$ has content $(1^n)$ (but is not necessarily standard).  Then we define $$\mathbf{x}_T^S=\prod_{s\in D(\lambda)} x_{T(s)}^{\cw_S(s)}$$ where $D(\lambda)$ is the set of squares in the diagram of $\lambda$.  Finally, define the \defn{semistandard higher Specht polynomial}
  $$F_T^S=\varepsilon_T \cdot \mathbf{x}_T^S.$$
\end{definition}

Recall that $\SSYT(\lambda,\mu)$ is the set of all semistandard Young tableaux of shape $\lambda$ and content $\mu$.  We also write $\SYT(\lambda)=\SSYT(\lambda, (1^n))$ for the set of standard Young tableaux of shape $\lambda$. Then we can restate Conjecture \ref{conj:R-mu} as follows.

\begin{conjecture}[Conjecture \ref{conj:R-mu} restated]
 The set of polynomials $$\BBB_\mu=\left\{F_T^S \,:\, (S,T)\in \bigcup_{\lambda\vdash n} \SSYT(\lambda,\mu)\times \SYT(\lambda) \right\}$$ is a basis of $R_\mu$.
\end{conjecture}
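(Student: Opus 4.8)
The plan is to reduce the conjecture to a single linear-independence statement and then attack that statement by a Garsia--Procesi-style induction on $|\mu|$.

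First I would dispense with the bookkeeping. Since $\grFrob(R_\mu;q) = \widetilde{H}_\mu(\xx;q) = \sum_\lambda \sum_{S \in \SSYT(\lambda,\mu)} q^{\cc(S)} s_\lambda$ and, as recalled in the excerpt, each semistandard higher Specht polynomial $F_T^S$ is homogeneous of degree $\cc(S)$, the candidate set $\BBB_\mu$ has exactly $\dim (R_\mu)_d$ elements of degree $d$, namely the pairs $(S,T) \in \SSYT(\lambda,\mu)\times\SYT(\lambda)$ with $\cc(S)=d$. Hence it suffices to prove that $\BBB_\mu$ is linearly independent modulo $I_\mu$ (equivalently, spans $R_\mu$). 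The refinement to a genuine \emph{higher Specht} basis is then automatic by the usual Young-symmetrizer argument used in \cite{ATY,FultonHarris}: for a fixed $S \in \SSYT(\lambda,\mu)$ the submodule $\mathrm{span}\{F_T^S : T \in \SYT(\lambda)\} = \QQ[S_n]\cdot\varepsilon_T\cdot\xx_T^S$ is a quotient of $\QQ[S_n]\varepsilon_T$, hence zero or isomorphic to $V_\lambda$, and linear independence forces it to be an honest copy of $V_\lambda$; these copies then exhaust $R_\mu$ by the dimension count. So the entire problem is the vector-space independence of $\BBB_\mu$ in $R_\mu$.

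Second, I would prove that independence by induction on $n=|\mu|$ using the exact sequences of Garsia--Procesi \cite{GarsiaProcesi,deConciniProcesi}. Writing $\mu^{(1)},\dots,\mu^{(c)}$ for the partitions obtained by deleting one of the $c$ corner cells of $\mu$, the ring $R_\mu$ (restricting the $x_n$-action appropriately, or directly as in \cite{GarsiaProcesi}) carries a filtration whose successive quotients are the $R_{\mu^{(i)}}$ up to degree shifts, and this filtration is exactly what realizes the Hall--Littlewood recursion on the Frobenius side. The combinatorial heart of the argument is then a canonical box-removal operation on pairs $(S,T)$ --- deleting the cell of $S$ in its last column containing the largest letter, or an analogous distinguished cell --- which should (i) send a bottom-filtration-piece polynomial $F_T^S$ to a multiple of some $x_n$-power times a polynomial of $\BBB_\mu$, (ii) project a quotient-piece polynomial $F_T^S$ onto a semistandard higher Specht polynomial $F_{\bar T}^{\bar S}$ for the appropriate $\mu^{(i)}$, and (iii) match the induced partition of $\BBB_\mu$ with the partition coming from the filtration. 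Granting this, independence of $\BBB_\mu$ follows by feeding the inductive hypothesis (independence of each $\BBB_{\mu^{(i)}}$ in $R_{\mu^{(i)}}$) up the filtration, using exactness at each step. An essential sublemma is that $\cc$ is additive under this box removal --- the degree of $F_T^S$ must drop by precisely the degree shift of the corresponding filtration step --- which is a compatibility statement between the standard-subword decomposition defining $\cc$ and corner removal on $\lambda$.

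The hard part, and presumably the reason the authors prove only the two-row case, is exactly this compatibility. For standard tableaux the descent-based cocharge is local and interacts transparently with removing the cell of $n$; but for semistandard tableaux of content $\mu$ the cocharge is computed from the \emph{global} standard-subword decomposition of the reading word, and when $\mu$ loses a box its content and therefore its entire subword decomposition can reorganize in a way that need not correspond to any single corner removal on $\lambda$. Pinning down one box-removal on $(S,T)$ that is simultaneously compatible with (a) semistandardness, (b) the cocharge/standard-subword bookkeeping, and (c) the Garsia--Procesi filtration maps on $R_\mu$ is the crux; in the two-row case there are only two subwords and this bookkeeping collapses, which is what makes Theorem~\ref{thm:two-rows} tractable. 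A possible alternative avoiding the filtration is Gröbner-theoretic: fix a term order and show the leading monomials of the $F_T^S$ are precisely the Garsia--Procesi standard monomials for $I_\mu$ (those $x_1^{p_1}\cdots x_n^{p_n}$ cut out by the Tanisaki column bounds); but this demands understanding $\mathrm{LM}(\varepsilon_T\cdot\xx_T^S)$ for semistandard $S$, which is the same difficulty in a different guise.
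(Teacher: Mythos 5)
The statement you are addressing is a conjecture in the paper: the authors prove it only for one- and two-row shapes (Theorem \ref{thm:two-rows}), and your outline, while it correctly reproduces their overall strategy, does not close the gap that keeps it a conjecture. Your first reduction is sound and matches the paper: since each $F_T^S$ is homogeneous of degree $\cc(S)$ and $\grFrob(R_\mu;q)=\sum_\lambda\sum_{S\in\SSYT(\lambda,\mu)}q^{\cc(S)}s_\lambda$, the set $\BBB_\mu$ has the right number of elements in each degree, so everything comes down to linear independence in $R_\mu$; and the paper's Garnir/Young-symmetrizer analysis shows the fixed-$S$ spans are copies of $V_\lambda$, so a basis is automatically a higher Specht basis. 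Your second step — induction on $|\mu|$ via the Garsia--Procesi decomposition $R_\mu\cong\bigoplus_i x_n^{i-1}R_{\mu^{(i)}}$ (Lemma \ref{lem:induct}) together with a box-removal operation on pairs $(S,T)$ — is exactly the paper's strategy as well. But the entire content of the theorem lives in the step you label (i)--(iii) and then concede is ``the crux'': you never construct the box-removal map, never prove the cocharge compatibility, and never establish that the induced transition matrix between $\BBB_\mu$ and $\CCC_\mu=\bigcup_i x_n^{i-1}\BBB_{\mu^{(i)}}$ is invertible. So what you have is a correct framing plus an accurate diagnosis of the obstruction, not a proof.

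Moreover, the paper's own computations indicate that your item (iii) is false as stated beyond two rows: for $\mu=(3,1,1)$ in degree $2$ the transition matrix from $\BBB_\mu^{(d)}$ to $\CCC_\mu^{(d)}$ is not lower triangular under the natural (last-letter, filtration-respecting) orderings, only ``almost lower triangular'' in the authors' sense, i.e.\ it becomes triangular after right-multiplication by a suitable upper triangular matrix. So a clean matching of individual basis elements of $\BBB_\mu$ with the filtration pieces, with triangular change of basis, cannot be the mechanism; one needs to prove invertibility (or almost-lower-triangularity) of the full matrix, and no such argument is known except in the two-row case, where the unique semistandard $S$ in each degree and the explicit product formula $F_T^S=d!\,(n-d)!\prod_i(x_{t_i}-x_{b_i})$ collapse the bookkeeping and let the authors exhibit the needed relations explicitly inside $I_\mu$ via the Tanisaki generators. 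Your Gr\"obner-theoretic alternative faces the same unresolved difficulty (identifying leading monomials of $\varepsilon_T\cdot\xx_T^S$ for semistandard $S$), as you note. In short: the approach is the right one and coincides with the paper's, but the proposal proves only what the paper already proves and leaves the conjectural part untouched.
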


\subsection{Semistandard higher Specht modules in $\QQ[\xx_n]$}

As a step towards proving Conjecture \ref{conj:R-mu}, we consider the modules generated by the semistandard higher Specht polynomials as submodules of the full polynomial ring $\QQ[\xx_n]$, before descending to the quotient $R_\mu$.  In particular, we show that these give copies of the ordinary polynomial Specht modules in higher degrees.

\begin{definition}
Write $\Tab(\lambda)$ to denote the set of all (not necessarily standard) tableaux of shape $\lambda$ and of content $(1^n)$.  In other words, $\Tab(\lambda)$ is the set of all $n!$ ways of filling the boxes of $\lambda$ with the numbers $1,2,3,\ldots,n$ in any manner.

Note that if $\lambda\vdash n$ then $S_n$ naturally acts on $\Tab(\lambda)$ by permuting entries in a tableau.
\end{definition}

\begin{definition}
For a fixed $S\in \SSYT(\lambda,\mu)$, define $$V^S:=\mathrm{span}\{F^S_T:T\in \mathrm{Tab}(\lambda)\}$$ to be the span of the higher Specht polynomials associated to $S$, considered as a subspace of $R=\mathbb{C}[x_1,\ldots,x_n]$ where $n=|\lambda|$.  Similarly define $\overline{V^S}$ to be its image in the quotient $R_\mu$.  
\end{definition}

We first show that $V^S$ is an irreducible $S_n$-module isomorphic to the standard Specht module $V^\lambda$.  We begin with several technical lemmas.  Throughout, we fix a choice of semistandard Young tableau $S\in \SSYT(\lambda,\mu)$.

\begin{proposition}
  Let $\omega\in S_n$ and $T\in \Tab(\lambda)$.  Then $$\omega F_T^S=F_{\omega T}^S.$$ 
\end{proposition}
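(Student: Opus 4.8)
The plan is to unwind the definitions and use the fact that $\varepsilon_T$ is built from the row and column groups of $T$, which are conjugated in the obvious way by the $S_n$-action on tableaux. First I would recall that for any $\omega \in S_n$ and any $T \in \Tab(\lambda)$ we have $R(\omega T) = \omega R(T) \omega^{-1}$ and $C(\omega T) = \omega C(T) \omega^{-1}$, since a permutation preserves rows (resp.\ columns) of $\omega T$ exactly when its conjugate by $\omega$ preserves rows (resp.\ columns) of $T$. Consequently
\[
\varepsilon_{\omega T} = \sum_{\tau \in C(\omega T)} \sum_{\sigma \in R(\omega T)} \sgn(\tau)\, \tau \sigma
= \sum_{\tau' \in C(T)} \sum_{\sigma' \in R(T)} \sgn(\omega \tau' \omega^{-1})\, (\omega \tau' \omega^{-1})(\omega \sigma' \omega^{-1})
= \omega \, \varepsilon_T \, \omega^{-1},
\]
using that $\sgn$ is a class function.

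Next I would handle the monomial factor. The key point is that $\mathbf{x}_T^S = \prod_{s \in D(\lambda)} x_{T(s)}^{\cw_S(s)}$ depends on $T$ only through the placement of entries, and the cocharge labels $\cw_S(s)$ are attached to the \emph{squares} $s$ (via the fixed tableau $S$), not to the entries. Reindexing the product by the squares of $\omega T$, the square $s$ of $\omega T$ carries entry $(\omega T)(s) = \omega(T(\omega^{-1}\text{-preimage}))$; more cleanly, $(\omega T)(s) = \omega(T'(s))$ is not quite it — instead one checks directly that $\omega \cdot \mathbf{x}_T^S = \prod_s x_{\omega(T(s))}^{\cw_S(s)}$, and since $\omega(T(s)) = (\omega T)(s)$ for every square $s$, this equals $\prod_s x_{(\omega T)(s)}^{\cw_S(s)} = \mathbf{x}_{\omega T}^S$. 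So $\omega \cdot \mathbf{x}_T^S = \mathbf{x}_{\omega T}^S$.

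Combining the two computations gives
\[
\omega F_T^S = \omega \varepsilon_T \mathbf{x}_T^S = (\omega \varepsilon_T \omega^{-1})(\omega \mathbf{x}_T^S) = \varepsilon_{\omega T}\, \mathbf{x}_{\omega T}^S = F_{\omega T}^S,
\]
as desired. I do not anticipate a genuine obstacle here; the only thing requiring care is getting the conjugation conventions and the square-versus-entry bookkeeping exactly right, in particular confirming that $(\omega T)(s) = \omega(T(s))$ under the paper's convention for the $S_n$-action on $\Tab(\lambda)$ (permuting the entries) and that the cocharge decoration travels with the squares rather than the labels. Once those conventions are pinned down, the proof is a one-line substitution.
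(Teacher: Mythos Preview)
Your proof is correct and follows essentially the same approach as the paper: both arguments observe that $C(\omega T)=\omega C(T)\omega^{-1}$ and $R(\omega T)=\omega R(T)\omega^{-1}$ (so that $\varepsilon_{\omega T}=\omega\varepsilon_T\omega^{-1}$), verify that $\omega\cdot\mathbf{x}_T^S=\mathbf{x}_{\omega T}^S$, and combine these via the one-line substitution $\omega F_T^S=(\omega\varepsilon_T\omega^{-1})(\omega\mathbf{x}_T^S)=\varepsilon_{\omega T}\mathbf{x}_{\omega T}^S$. The only difference is cosmetic: the paper carries out the conjugation term-by-term inside the double sum, whereas you package it as an identity of group algebra elements first.
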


\begin{proof}
  First note that if $\tau \in C(T)$ then $\tau':=\omega \tau \omega^{-1}\in C(\omega T)$, and similarly if $\sigma \in R(T)$ then $\sigma':=\omega\sigma\omega^{-1}\in R(\omega T)$.  Notice also that $\omega \mathbf{x}_T^S=\mathbf{x}_{\omega T}^S$.
  We therefore have 
  \begin{align*}
    \omega F_T^S=\omega \varepsilon_T \mathbf{x}_T^{S} 
    &= \sum_{\tau\in C(T)} \sum_{\sigma \in R(T)} \sgn(\tau)\omega \tau \sigma \mathbf{x}_T^{S} \\
    &= \sum_{\tau\in C(T)} \sum_{\sigma \in R(T)} \sgn(\omega \tau\omega^{-1})(\omega \tau \omega^{-1})(\omega \sigma \omega^{-1})\omega \mathbf{x}_T^{S} \\
    &= \sum_{\tau'\in C(\omega T)} \sum_{\sigma' \in R(\omega T)} \sgn(\tau')\tau'\sigma'\mathbf{x}_{\omega T}^{S} \\
    &= F_{\omega T}^S
  \end{align*}
  as desired.
\end{proof}

\begin{corollary}
  The space $V^S$ is a cyclic $S_n$-submodule of $R$.
\end{corollary}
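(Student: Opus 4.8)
The goal is to show that $V^S$ is a cyclic $S_n$-submodule of $R = \mathbb{C}[x_1,\ldots,x_n]$. The plan is straightforward given the preceding Proposition, which says $\omega F_T^S = F_{\omega T}^S$ for all $\omega \in S_n$ and $T \in \Tab(\lambda)$. First I would observe that $V^S = \mathrm{span}\{F_T^S : T \in \Tab(\lambda)\}$ is a subspace of $R$ by definition, so it remains to check that it is $S_n$-stable and generated by a single element. Stability is immediate: for any basis-type spanning element $F_T^S$ and any $\omega \in S_n$, the Proposition gives $\omega F_T^S = F_{\omega T}^S \in V^S$ since $\omega T \in \Tab(\lambda)$; extending linearly, $\omega V^S \subseteq V^S$, so $V^S$ is an $S_n$-submodule of $R$.

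For cyclicity, I would fix any single tableau $T_0 \in \Tab(\lambda)$ and note that $S_n$ acts transitively on $\Tab(\lambda)$ (since $\Tab(\lambda)$ consists of all $n!$ fillings of the boxes of $\lambda$ with $1,\ldots,n$, and $S_n$ permutes these fillings transitively). Hence for every $T \in \Tab(\lambda)$ there is some $\omega \in S_n$ with $\omega T_0 = T$, so $F_T^S = F_{\omega T_0}^S = \omega F_{T_0}^S$ lies in the cyclic submodule $\mathbb{C}[S_n] \cdot F_{T_0}^S$. Since the $F_T^S$ span $V^S$, we conclude $V^S = \mathbb{C}[S_n] \cdot F_{T_0}^S$, so $V^S$ is cyclic, generated for instance by $F_S^S$ (taking $T_0 = S$, viewing the semistandard tableau's "standardization" or simply any fixed filling).

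There is essentially no obstacle here: the content of the statement has already been packaged into the Proposition, and the only additional facts needed are the trivial observations that $\Tab(\lambda)$ is an $S_n$-set on which $S_n$ acts transitively and that a submodule spanned by a single orbit of a spanning vector is cyclic. I would simply record these two observations and conclude.
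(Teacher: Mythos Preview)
Your argument is correct and is exactly the intended one: the paper states this Corollary without proof, as an immediate consequence of the preceding Proposition, and your two observations (that $\omega F_T^S = F_{\omega T}^S$ gives $S_n$-stability, and that transitivity of $S_n$ on $\Tab(\lambda)$ gives cyclicity) are precisely what is needed. One small slip: you cannot take $T_0 = S$, since $S \in \SSYT(\lambda,\mu)$ has content $\mu$, not $(1^n)$, and hence $S \notin \Tab(\lambda)$ in general; but as you already note, any fixed element of $\Tab(\lambda)$ works as the cyclic generator.
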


We now show that, assuming the polynomials $F_T^S$ are independent for $T$ standard, the submodule $V^S$ is a copy of the irreducible $S_n$-module $V^\lambda$.  We recall (see, for instance, \cite{Peel}) the Garnir relations that govern the $S_n$-module structure of $V^\lambda$ with respect to the standard Specht basis.

\begin{definition}
  Let $T\in \Tab(\lambda)$.  Let $a$ and $b$, with $a<b$, be the indices of two distinct columns of $T$, and let $t\le \lambda'_b$ be a row index of one of the entries of column $b$.  Then we write $S^{a,b}_t$ to be the subgroup of $S_n$ consisting of all permutations of the set of elements of $T$ residing either in column $a$ weakly above $t$, or in column $b$ weakly below $t$. 
  
  The \defn{Garnir element} $G^{a,b}_t$ is the partial antisymmetrizer $$G^{a,b}_t:=\sum_{\omega\in S^{a,b}_t} \sgn(\omega)\omega.$$
\end{definition}

\begin{proposition}\label{prop:Garnir}
  The element $F_T^S$, for any $T\in \Tab(\lambda)$, satisfies the Garnir relation $G^{a,b}_{t}(F_T^S)=0$.
\end{proposition}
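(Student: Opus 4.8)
The plan is to imitate the classical proof that Garnir elements annihilate the Specht polynomial $F_T$, carrying the monomial factor $\mathbf{x}^S_T$ along through the computation.

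First I would reduce the claim to a vanishing statement. Let $A$ be the set of cells of $T$ lying in column $a$ in rows $\ge t$, and $B$ the set of cells in column $b$ in rows $\le t$; then $S^{a,b}_t = S_{A\cup B}$ acts by permuting the entry set $T(A\cup B)$, and $|A\cup B| = \lambda'_a + 1$. Since $A$ is contained in a single column and $B$ in a single column, $S_A \times S_B$ is a subgroup of $C(T)$, and because $\varepsilon_T$ is a left multiple of the column antisymmetrizer, $F^S_T = \varepsilon_T \mathbf{x}^S_T$ is antisymmetric under $C(T)$, in particular under $S_A \times S_B$. Writing $S_{A\cup B}$ as a disjoint union of left cosets of $S_A \times S_B$ with representatives $\omega_j$ (the shuffles interleaving $T(A)$, read down column $a$, with $T(B)$, read down column $b$) and pulling out the $S_A\times S_B$-antisymmetry, one obtains $G^{a,b}_t F^S_T = |S_A|\,|S_B|\sum_j \sgn(\omega_j)\,\omega_j F^S_T$, which by the preceding proposition equals $|S_A|\,|S_B|\sum_j \sgn(\omega_j)\,F^S_{\omega_j T}$. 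So the claim is equivalent to the shuffle identity $\sum_j \sgn(\omega_j)\,F^S_{\omega_j T} = 0$.

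Second -- and this is the main obstacle -- I would prove this identity. The combinatorial reason it should hold is the same pigeonhole that drives the classical Garnir relation: in each $\omega_j T$ the $\lambda'_a + 1$ entries of $A\cup B$ are spread over cells lying in only $\lambda'_a$ rows, so they cannot be placed so as to increase strictly up every column. The delicate point is that here the objects are the honest polynomials $\varepsilon_{\omega_j T}\mathbf{x}^S_{\omega_j T}$ rather than tabloids, so this cancellation must be made compatible with the cocharge monomial. This is where I would bring in the structure of $\cw(S)$: placed on the diagram of $\lambda$ it is weakly increasing along rows and strictly increasing up columns, which controls precisely how $\mathbf{x}^S_{\omega T}$ varies as $\omega$ ranges over $S_{A\cup B}$; combined with the fact that $F^S_T$ is divisible by the column Vandermondes $\Delta(\xx_{T(a)})$ and $\Delta(\xx_{T(b)})$, this should let one match the monomials of $\sum_j \sgn(\omega_j)F^S_{\omega_j T}$ in sign-reversing pairs, generalizing the tabloid cancellation of the classical argument.

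I expect essentially all of the real work in this proposition to lie in that second step. In the classical setting $F_T$ is literally a product of column Vandermondes, and the Garnir relation drops out of a one-line degree count; for $F^S_T = \varepsilon_T \mathbf{x}^S_T$, however, the row symmetrizer inside $\varepsilon_T$ interacts nontrivially with the cocharge monomial, and that interaction has to be tracked carefully. Once the shuffle identity $\sum_j \sgn(\omega_j) F^S_{\omega_j T} = 0$ is established, the proposition follows immediately.
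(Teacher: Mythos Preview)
Your reduction in the first paragraph is correct and matches the standard opening move: pull out the $S_A\times S_B$-antisymmetry of $F_T^S$ (which follows from $\tau\,\alpha(C(T))=\sgn(\tau)\alpha(C(T))$ for $\tau\in C(T)$) and reduce $G^{a,b}_t F_T^S=0$ to a signed sum over shuffle representatives.

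The gap is in your second step.  You diagnose the ``delicate point'' as the interaction of the row symmetrizer with the cocharge monomial, and you propose to resolve it by exploiting the semistandard shape of $\cw(S)$ and divisibility by column Vandermondes.  This is looking in the wrong place.  The paper's proof (following Peel) shows that the vanishing has nothing to do with $\mathbf{x}_T^S$ at all: one proves that $\alpha(U)\,\varepsilon_T\, p=0$ for \emph{every} polynomial $p$, where $U=S^{a,b}_t$.  Writing $\varepsilon_T=\alpha(C)\beta(R)$ and using $\alpha(U)\alpha(C)=|U\cap C|\,\alpha(UC)$, the pigeonhole on $|A\cup B|=\lambda'_a+1$ produces, for each $\sigma\in UC$, a transposition $\rho_\sigma\in R(T)$ with $\sigma\rho_\sigma\in UC$; pairing $\sigma$ with $\sigma\rho_\sigma$ one computes
\[
(\sgn(\sigma)\sigma+\sgn(\sigma\rho_\sigma)\sigma\rho_\sigma)\,\beta(R)\,p
=\sgn(\sigma)\sigma\,(\beta(R)-\rho_\sigma\beta(R))\,p=0,
\]
since $\rho_\sigma\in R$ merely permutes the summands of $\beta(R)$.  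The monomial $p=\mathbf{x}_T^S$ never enters.  So the very feature you flagged as the obstacle---the row symmetrizer inside $\varepsilon_T$---is what \emph{supplies} the cancellation, and no property of $\cw(S)$ is needed.

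In short: your outline is not wrong so much as aimed at a harder target than necessary.  Replace your monomial-matching plan with the Peel-style involution on $UC$ and the identity $\rho\,\beta(R)=\beta(R)$, and the proposition follows immediately for arbitrary $S$.
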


To prove this proposition, we first show that the analog of Lemma 3.3 in \cite{Peel} holds here.  To state it, we introduce the Young (anti)symmetrizers $\alpha$ and $\beta$ defined as follows.  For any subgroup $U\subseteq S_n$, define $$\alpha(U)=\sum_{\tau \in U}\sgn(\tau)\tau\hspace{0.5cm}\text{ and }  \hspace{0.5cm}\beta(U)=\sum_{\sigma\in U}\sigma.$$  In this notation, the Young symmetrizer $\varepsilon_T$ can be written as $$\varepsilon_T=\alpha(C(T))\beta(R(T)).$$

\begin{lemma}
  Let $U$ be any subgroup of $S_n$ and let $C=C(T)$ where $T\in \Tab(\lambda)$.  Suppose there is an involution $\sigma\mapsto \sigma'$ on $UC$ such that for each $\sigma\in UC$, there exists $\rho_\sigma\in R(T)$ for which $\rho_\sigma^2=1$, $\sgn(\rho_\sigma)=-1$, and $\sigma'=\sigma\rho_\sigma$.  Then $$\alpha(U) F_T^S=0.$$
\end{lemma}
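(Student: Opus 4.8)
The plan is to follow the classical argument of Peel (the analogue of Lemma~3.3 of \cite{Peel}), with one adjustment: since $\mathbf{x}_T^S$ need not be invariant under row permutations of $T$, we first absorb the row symmetrizer. Writing $\varepsilon_T = \alpha(C)\beta(R(T))$ with $C := C(T)$, set $y := \beta(R(T))\cdot\mathbf{x}_T^S$, so that $F_T^S = \alpha(C)\cdot y$. The first point to record is that $y$ is fixed by left multiplication by every $\rho \in R(T)$: left multiplication by $\rho$ merely permutes the group $R(T)$, so $\rho\,\beta(R(T)) = \beta(R(T))$ and hence $\rho\cdot y = y$. This is exactly the property that $R(T)$-invariance of a tabloid provides in the classical setting, so from here the only thing special about $\mathbf{x}_T^S$ is that it is a polynomial on which $S_n$ acts; nothing about the cocharge labels $\cw_S(s)$ enters this lemma.

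Next I would convert $\alpha(U)F_T^S = \alpha(U)\alpha(C)\cdot y$ into a single signed sum over the product set $UC$. The multiplication map $U \times C \to UC$ has all of its fibers of the common size $|U\cap C|$, and on the fiber over an element $\sigma = u\tau$ the sign $\sgn(u)\sgn(\tau) = \sgn(\sigma)$ is constant; collecting terms therefore gives
$$\alpha(U)F_T^S \;=\; \alpha(U)\alpha(C)\cdot y \;=\; |U\cap C|\sum_{\sigma \in UC}\sgn(\sigma)\,\sigma\cdot y.$$

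Finally I would cancel this sum in pairs using the hypothesized involution $\sigma \mapsto \sigma' = \sigma\rho_\sigma$. It has no fixed point, since $\sigma' = \sigma$ would force $\rho_\sigma = 1$, contradicting $\sgn(\rho_\sigma) = -1$; so $UC$ is partitioned into two-element orbits $\{\sigma,\sigma'\}$. On such an orbit $\sgn(\sigma') = \sgn(\sigma)\sgn(\rho_\sigma) = -\sgn(\sigma)$, whereas $\sigma'\cdot y = \sigma\rho_\sigma\cdot y = \sigma\cdot(\rho_\sigma\cdot y) = \sigma\cdot y$ by the invariance of $y$ noted above; hence the contributions of $\sigma$ and $\sigma'$ cancel. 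Summing over orbits yields $\sum_{\sigma\in UC}\sgn(\sigma)\,\sigma\cdot y = 0$ and therefore $\alpha(U)F_T^S = 0$. I do not expect a genuine obstacle here: the statement is essentially formal once the bookkeeping is in place. The only delicate step is the second one---confirming that the fibers of $U\times C \to UC$ all have size $|U\cap C|$ and that $\sgn(u)\sgn(\tau)$ is constant on each fiber---which I would verify by fixing one preimage $(u_0,\tau_0)$ of a given $\sigma$ and parametrizing the remaining preimages as $(u_0 w,\, w^{-1}\tau_0)$ for $w \in U\cap C$.
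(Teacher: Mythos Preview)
Your proof is correct and follows essentially the same approach as the paper: both write $\alpha(U)F_T^S = |U\cap C|\,\alpha(UC)\,\beta(R(T))\mathbf{x}_T^S$, then pair the terms of $\alpha(UC)$ via the involution $\sigma\mapsto\sigma'$ and cancel each pair using $\rho_\sigma\beta(R(T)) = \beta(R(T))$. Your introduction of $y := \beta(R(T))\cdot\mathbf{x}_T^S$ and your explicit fiber-counting for the identity $\alpha(U)\alpha(C) = |U\cap C|\,\alpha(UC)$ are cosmetic differences only.
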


\begin{proof}
  We have $\alpha(U)\alpha(C)=|U\cap C|\alpha(UC)$ (see Lemma 3.2 in \cite{Peel}).  Therefore \begin{align*}
      \alpha(U)F_T^S&=\alpha(U)\varepsilon_T \mathbf{x}_T^S \\
      &= \alpha(U)\alpha(C)\beta(R)\mathbf{x}_T^S \\
      &= |U\cap C| \alpha(UC)\beta(R) \mathbf{x}_T^S
  \end{align*}
  where $R=R(T)$ is the group of row permutations.  Due to the involution $\sigma\mapsto \sigma'$, which has no fixed points because $\sgn(\rho_\sigma)=-1$ for each $\rho_\sigma$, we have that the terms in $\alpha(UC)$ can be partitioned into pairs of terms $\sgn(\sigma)\sigma+\sgn(\sigma')\sigma'$.  We claim that each of these two-term sums kills $\beta(R)x_T^S$.  Indeed, we have
  \begin{align*}
      (\sgn(\sigma)\sigma +\sgn(\sigma')\sigma') \beta(R)\mathbf{x}_T^S &= (\sgn(\sigma)\sigma +\sgn(\sigma)\sgn(\rho_\sigma)\sigma\rho_\sigma)\beta(R) \mathbf{x}_T^S \\
      &= (\sgn(\sigma)\sigma -\sgn(\sigma)\sigma\rho_\sigma)\beta(R)\mathbf{x}_T^S \\
      &= (\sgn(\sigma)\sigma \beta(R)- \sgn(\sigma)\sigma \rho_\sigma\beta(R))\mathbf{x}_T^S \\
      &= (\sgn(\sigma)\sigma \beta(R)- \sgn(\sigma)\sigma \beta(R))\mathbf{x}_T^S \\
      &=0
  \end{align*}
  where the last computation follows because $\rho_\sigma\in R$ and therefore $\rho_\sigma$ permutes the terms of $\beta(R)$.  It follows that $\alpha(U)F_T^S=0$, as desired.
\end{proof}

The above lemma is the exact analog to Lemma 3.3 in \cite{Peel}.  Using this lemma, the proof of Proposition \ref{prop:Garnir} now exactly follows that of Theorem 3.1 in \cite{Peel} for the ordinary Specht polynomials, since the remaining steps of Peel's proof only depend on the operators $\alpha(U)$ and not on the specific polynomials they are applied to.  We therefore omit the rest of the details and refer to \cite{Peel}.  

It now follows that the elements $F_T^S$, for $T$ a standard tableau of shape $\mathrm{shape}(S)$, span the space $V^S$.  Finally, we show the polynomials $F_T^S$ for $T\in \SYT(n)$ are linearly independent in $\mathbb{C}[x_1,\ldots,x_n]$.  In fact, their images are independent in the coinvariant ring $R_n$.

To prove this, we use the \defn{last letter order} $\lessdot$ on standard Young tableaux defined in \cite{ATY}.  In particular, for any two standard tableaux $T_1,T_2$ of the same shape, let $m(T_{1},T_2)$ be the largest letter that is not in the same square in $T_1$ as in $T_2$.  Then we say $T_1\lessdot T_2$ if $m(T_1,T_2)$ is in row $\ell$ in $T_1$ and row $k$ in $T_2$ with $\ell<k$.

\begin{example}
  The last letter order on the shape $(2,4)$ puts the standard tableaux in the following order from least to greatest:
  $$\young(24,1356), \hspace{0.3cm} \young(34,1256), \hspace{0.3cm}\young(25,1346), \hspace{0.3cm}\young(35,1246),\hspace{0.3cm}\young(45,1236),$$
  $$\young(26,1345), \hspace{0.3cm}\young(36,1245), \hspace{0.3cm}\young(46,1235), \hspace{0.3cm}\young(56,1234)$$

\end{example}

We also require the following elementary linear algebra fact, whose proof we omit.

\begin{lemma}
  Let $V$ and $W$ be vector spaces over a field $k$ of characteristic $0$, with a nondegenerate bilinear form $\langle,\rangle:V\times W\to k$.  Let $v=\{v_1,\ldots,v_r\}\subseteq V$ and $w=\{w_1,\ldots,w_r\}\subseteq W$, and suppose $\langle v_i,w_j\rangle=0$ whenever $i<j$ and further that $\langle v_i,w_i\rangle\neq 0$ for all $i$.  Then $v$ and $w$ are both independent sets of vectors in $V$ and $W$ respectively. 
\end{lemma}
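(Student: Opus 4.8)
The plan is to collapse the two triangularity hypotheses into the invertibility of a single finite matrix, and then read off both independence statements from that. First I would form the $r \times r$ \emph{pairing matrix} $M = (M_{ij})_{1 \le i,j \le r}$ defined by $M_{ij} = \langle v_i, w_j \rangle$. The hypothesis that $\langle v_i, w_j \rangle = 0$ whenever $i < j$ says exactly that all entries of $M$ strictly above the main diagonal vanish, i.e.\ $M$ is lower triangular, and the hypothesis $\langle v_i, w_i \rangle \neq 0$ for all $i$ says its diagonal entries are nonzero. Hence $\det M = \prod_{i=1}^{r} M_{ii} \neq 0$, so $M$ is invertible over $k$.

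Next I would deduce independence of $\{v_1, \dots, v_r\}$. Suppose $\sum_{i=1}^{r} a_i v_i = 0$ with $a_i \in k$. Pairing this relation on the right with $w_j$ and using bilinearity of $\langle\,,\rangle$ gives $\sum_{i=1}^{r} a_i M_{ij} = 0$ for every $j$, which says that the row vector $(a_1, \dots, a_r)$ is killed by right multiplication by $M$. Since $M$ is invertible, $(a_1, \dots, a_r) = 0$, so $\{v_1, \dots, v_r\}$ is linearly independent in $V$. The argument for $\{w_1, \dots, w_r\}$ is the mirror image: if $\sum_{j=1}^{r} b_j w_j = 0$, then pairing on the left with $v_i$ yields $\sum_{j=1}^{r} M_{ij} b_j = 0$ for every $i$, so $M$ kills the column vector $(b_1, \dots, b_r)^{\mathsf T}$, and invertibility of $M$ forces $b_1 = \cdots = b_r = 0$, giving independence of the $w_j$ in $W$.

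There is no genuine obstacle in this lemma; it is pure bookkeeping. The only point worth flagging is that this particular implication uses only the bilinearity of $\langle\,,\rangle$ together with the invertibility of the finite matrix $M$: the global nondegeneracy of the form on $V \times W$ and the assumption that $k$ has characteristic zero are not actually needed here (a product of nonzero field elements is nonzero in any field), and are presumably recorded merely because that is the ambient setting in which the lemma will be applied.
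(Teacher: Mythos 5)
Your proof is correct, and it is essentially the paper's argument (which the paper in fact omits): the key point in both is that the pairing matrix $M_{ij}=\langle v_i,w_j\rangle$ is triangular with nonzero diagonal. You package this as invertibility of $M$ and kill a putative relation by multiplying by $M$, whereas the paper's (suppressed) proof does the same thing by successive elimination, pairing a relation $\sum a_i v_i=0$ with $w_r, w_{r-1},\dots$ in turn; your added remark that nondegeneracy and characteristic zero are not needed is also accurate.
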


\begin{proposition}
  For a fixed $S\in \SSYT(\lambda)$, the polynomials $F_T^S$, for $T\in \SSYT(\lambda,(1^n))$, are independent in the coinvariant ring $R_n$.
\end{proposition}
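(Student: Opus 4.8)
The plan is to mimic the Ariki–Terasoma–Yamada independence argument using the last letter order $\lessdot$ together with the linear algebra lemma just stated. Concretely, I would set $V = W = R_n$ (the coinvariant ring), equipped with a nondegenerate $S_n$-invariant bilinear form — for instance the Poincaré-duality pairing on $R_n$, or equivalently the form $\langle f, g\rangle$ obtained by multiplying representatives and extracting the coefficient of the top-degree monomial, which is nondegenerate since $R_n$ is a Poincaré duality algebra. The goal is to produce, for each standard $T$ of shape $\lambda$, a companion element $G_T \in R_n$ (a ``dual'' higher Specht-type polynomial) so that $\langle F_{T_1}^S, G_{T_2}\rangle = 0$ whenever $T_1 \lessdot T_2$ and $\langle F_T^S, G_T\rangle \neq 0$ for every $T$. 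Enumerating the $F_T^S$ in increasing last letter order and applying the linear algebra lemma then gives the desired independence in $R_n$.

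The key steps, in order: (1) Recall from \cite{ATY} the construction of the dual family — for the classical higher Specht polynomials, ATY pair $F_T^S$ against a polynomial built from the \emph{complementary} cocharge data (replacing each cocharge label $c$ in the relevant box by $\des(S) - c$, or using the ``co-cocharge'' monomial) together with the idempotent $\varepsilon_T$, and show the resulting Gram-type matrix is triangular with respect to $\lessdot$. (2) Check that this triangularity argument goes through verbatim for \emph{semistandard} $S$: the only inputs are (a) the total degree $\cc(S)$ of $F_T^S$, so that the complementary monomial has the right degree to pair nontrivially in $R_n$, and (b) the combinatorial fact that the leading term of $F_T^S$ (with respect to a suitable monomial order, tracked through $\varepsilon_T$) is governed by $T$ in a way compatible with $\lessdot$ — this last point is where the cocharge word of a semistandard $S$ enters, and it behaves formally just like the standard case because $\cw_S(s)$ is still a well-defined labeling of the boxes. (3) Verify the diagonal non-vanishing $\langle F_T^S, G_T\rangle \neq 0$, which reduces to a monomial computation: the product of the cocharge monomial and its complement is a fixed monomial whose image in $R_n$ is nonzero, and the idempotents contribute a nonzero scalar ($\varepsilon_T^2 = $ (nonzero constant)$\cdot \varepsilon_T$ in characteristic $0$). (4) Apply the linear algebra lemma with the sets ordered by $\lessdot$ to conclude.

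The main obstacle I anticipate is Step (2): confirming that the triangularity of the pairing matrix with respect to the last letter order survives the passage from standard to semistandard $S$. In the ATY setup the cocharge word of a \emph{standard} tableau interacts cleanly with the last letter order because the cocharge statistic and the descent structure are tightly linked; for a semistandard $S$ with content $\mu$, the cocharge word is computed via the standard subword decomposition, and one must make sure that the box-labeling $\cw_S(\cdot)$ still produces a monomial $\xx_T^S$ whose ``shape-position'' leading behavior is controlled by $T$ alone and not by the internal structure of $S$. I expect this to work because, once $S$ is fixed, $\cw_S$ is just a fixed function on boxes and the argument only ever compares two tableaux $T_1, T_2$ with the \emph{same} $S$; thus the ATY combinatorics applies with $\cw(S)$ replaced by $\cw_S$ throughout. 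Still, one should carefully re-examine the inequality $\cw_S(s) \le \des(S)$ used in the standard case — here the correct bound is that every label in $\cw_S$ is at most the number of standard subwords minus one, i.e.\ at most $\mu_1 - 1$ along any single subword, and one verifies the analogous degree bound guaranteeing the pairing lands in a degree where $R_n$ is nonzero. Once this degree/leading-term bookkeeping is pinned down, the rest of the proof is a direct transcription of \cite{ATY} and Peel-style arguments.
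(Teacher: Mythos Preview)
Your overall strategy is exactly the paper's: use the ATY bilinear form on $R_n$, build a dual family $G_T^S$, and show the pairing matrix is triangular in the last letter order. You also correctly isolate the reason the argument transfers to semistandard $S$: once $S$ is fixed, $\cw_S$ is just a fixed labeling of boxes, and both the diagonal nonvanishing and the off-diagonal vanishing depend only on the Young idempotents $\varepsilon_T$, not on which monomial they are applied to. The paper says this in one line (``it does not matter that we are applying the operators to different monomials than in \cite{ATY}''), so all of your Step~(2) worry about standard subword decompositions and the bound $\cw_S(s)\le \des(S)$ is unnecessary---none of that is used.

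Where your proposal does go wrong is the concrete choice of dual. You suggest pairing $\xx_T^{\cw(S)}$ against the ``complementary'' monomial with exponents $\des(S)-\cw_S(s)$. The product is then $\prod_i x_i^{\des(S)} = e_n^{\des(S)}$, which vanishes in $R_n$ as soon as $\des(S)\ge 1$, so your diagonal entries are zero and the lemma cannot be applied. The correct dual (the one ATY and the paper use) is
\[
G_T^S \;=\; \varepsilon_{T'}\cdot \xx_T^{\,T-1-\cw(S)},
\]
where $T'$ is the \emph{transpose} tableau and the exponent in box $s$ is $T(s)-1-\cw_S(s)$. Then $\xx_T^{\cw(S)}\cdot \xx_T^{\,T-1-\cw(S)} = x_1^0 x_2^1\cdots x_n^{n-1}$, the staircase monomial, whose antisymmetrization is the Vandermonde $\Delta$; this is what forces $\langle F_T^S,G_T^S\rangle\neq 0$. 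Note also that the idempotent on the dual is $\varepsilon_{T'}$, not $\varepsilon_T$, so the relevant relation for triangularity is the ATY vanishing $\varepsilon_{T_1}\varepsilon_{T_2'}=0$ in the appropriate last-letter direction, not $\varepsilon_T^2 = c\,\varepsilon_T$. With these two corrections your outline becomes the paper's proof verbatim.
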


\begin{proof}
  We make use of the bilinear form and ordering on tableaux defined in \cite{ATY}.  In particular, for $f,g\in R_n$ define $$\langle f,g\rangle=\frac{1}{\Delta}\sum_{\sigma\in S_n} \sgn(\sigma)\sigma(\tilde{f}\tilde{g})\mid_{x_1=x_2=\cdots=x_n=0}$$  where $\tilde{f}$ and $\tilde{g}$ are lifts of $f$ and $g$ in $\mathbb{C}[x_1,\ldots,x_n]$ and $\Delta=\prod_{i<j}(x_i-x_j)$ is the Vandermonde determinant. 

Note that by the construction of the cocharge labels, there is always a permutation of the tuple $(0,1,2,\ldots,n-1)$ that is elementwise greater than the exponents of $x_T^S$.  Thus one can construct a \textit{complimentary monomial} $m_{T}^S$ to $x_T^S$ in the following way: order the factors $x_i^j$ in $x_T^S$ from largest to smallest $j$, breaking ties from largest to smallest $i$, and order the subscripts $i_1,\ldots,i_n$ in this order.  Then assign each $x_i$ the unique exponent in $m$ such that the product $m_T^S\cdot x_T^S=x_{i_1}^0x_{i_2}^1 x_{i_3}^2\cdots x_{i_n}^{n-1}$.
  
  Now, define $G_T^S=\varepsilon_{T'}\mathbf{m}_T^{S}$ where $T'$ is the transpose of the standard Young tableau $T$.  Then by an identical computation as in \cite{ATY} (in the last paragraph of the proof of Proposition 1 part (2)), we have that $$\langle F_{T_1}^S, G_{T_2}^S\rangle$$ is nonzero if $T_1=T_2$, since the only surviving term in the computation is the product $x_{i_1}^0x_{i_2}^1x_{i_3}^2\cdots x_{i_n}^{n-1}$, whose antisymmetrization is, up to a sign, the Vandermonde determinant $\Delta$ itself.  Moreover, as in the proof of Proposition 2 of \cite{ATY}, we see that $\langle F_{T_1}^S, G_{T_2}^S\rangle$ is equal to $0$ if $T_1>T_2$ in the last letter order, as in this case $\varepsilon_{T_1}\varepsilon_{T_2}=0$ as operators (see \cite{ATY}).
  
  Thus we have an upper triangular transition matrix between the $F$ and $G$ polynomials, and so the polynomials $F_T^S$ for $T\in \Tab(\lambda)$ are independent in $R_n$.  
\end{proof}

\begin{corollary}
  The space $V^S$ is a copy of the irreducible $S_n$-module $V^\lambda$.
\end{corollary}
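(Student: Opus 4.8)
The plan is to combine the Garnir relations of Proposition~\ref{prop:Garnir} with the spanning and independence statements just established, in order to identify $V^S$ with the Specht module $V^\lambda$, where $\lambda=\shape(S)$.

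First I would recall what is already in place: the proposition giving $\omega F_T^S=F_{\omega T}^S$ shows that $T\mapsto F_T^S$ is an $S_n$-equivariant map out of the permutation module $\mathbb{Q}[\Tab(\lambda)]$, and that $V^S$ is the cyclic submodule it generates. The polynomials $F_T^S$ satisfy two families of relations: the column-antisymmetry $F_{\tau T}^S=\sgn(\tau)F_T^S$ for $\tau\in C(T)$, immediate from $\varepsilon_T=\alpha(C(T))\beta(R(T))$, and the Garnir relations $G^{a,b}_t F_T^S=0$ of Proposition~\ref{prop:Garnir}. These are exactly the defining relations in the presentation of the Specht module $V^\lambda$ by polytabloids used by Peel~\cite{Peel} (see also \cite{Sagan}), so the equivariant map $[T]\mapsto F_T^S$ descends to a surjective $S_n$-module homomorphism $\varphi\colon V^\lambda\twoheadrightarrow V^S$; here surjectivity is the statement, proved just above, that the $F_T^S$ with $T\in\SYT(\lambda)$ span $V^S$.

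It then remains to check that $\varphi$ is an isomorphism, which is a dimension count. The spanning statement gives that $\{F_T^S : T\in\SYT(\lambda)\}$ generates $V^S$, and the preceding Proposition shows these polynomials are linearly independent (in fact already in $R_n$, hence a fortiori in $\mathbb{Q}[x_1,\dots,x_n]$), so they form a basis and $\dim V^S=|\SYT(\lambda)|=\dim V^\lambda$. A surjection between finite-dimensional vector spaces of equal dimension is an isomorphism, so $V^S\cong V^\lambda$, which is irreducible. (One can also argue more directly: fixing any $T_0\in\Tab(\lambda)$ we have $V^S=\mathbb{Q}[S_n]\cdot\varepsilon_{T_0}\mathbf{x}_{T_0}^S$, so left multiplication realizes $V^S$ as a homomorphic image of the left ideal $\mathbb{Q}[S_n]\varepsilon_{T_0}$, which is a copy of $V^\lambda$ by the theory of Young symmetrizers (cf.\ \cite{FultonHarris}); since $V^\lambda$ is irreducible and $V^S\neq 0$ by the independence result, this again forces $V^S\cong V^\lambda$.)

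The only point requiring genuine care is the first step---quoting the correct presentation of $V^\lambda$ and verifying that the map is well defined---but since Proposition~\ref{prop:Garnir} already supplies the Garnir relations and $\varepsilon_T$ supplies column antisymmetry, this is a routine bookkeeping step that runs parallel to \cite{Peel}. Everything else is the dimension count, which the spanning and independence results hand us directly; so the corollary is essentially a formal consequence of the work already carried out in this subsection.
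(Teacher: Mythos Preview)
Your proposal is correct and follows essentially the same route the paper implicitly has in mind: the Garnir relations (Proposition~\ref{prop:Garnir}) together with column antisymmetry give a surjection from the Specht module $V^\lambda$ onto $V^S$, and the spanning and independence results already established force this to be an isomorphism by a dimension count. The paper states the corollary without proof precisely because it is meant as an immediate consequence of the preceding propositions, and your write-up supplies exactly those details; your parenthetical alternative via the left ideal $\QQ[S_n]\varepsilon_{T_0}$ is also valid and arguably cleaner.
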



\subsection{Independence in $R_\mu$ for two-row shapes}

We now show that, for two-row shapes $\mu$, the set of semistandard higher Specht polynomials for $R_\mu$ is independent in $R_\mu$, by induction on the size of $\mu$.  Our main tool is a recursion developed by Garsia and Procesi \cite{GarsiaProcesi}.  We recall their notation as follows.

\begin{definition}
  Let $\mu=(\mu_1,\ldots,\mu_r)$ be a partition and let $i\le r$.  Then $\mu^{(i)}$ is the partition whose parts  are $\mu_1,\ldots,\mu_{i-1},\mu_i-1,\mu_{i+1},\ldots,\mu_r$ (not necessarily in nonincreasing order).
\end{definition}  

\begin{example}
  If $\mu=(3,3,2)$, then $\mu^{(1)}=\mu^{(2)}=(3,2,2)$, and $\mu^{(3)}=(3,3,1)$. 
\end{example} 

Garsia and Procesi \cite{GarsiaProcesi} show that $$R_\mu=\bigoplus_{i=1}^{\mu'_1} x_n^{i-1}R_{\mu}/x_n^{i}R_\mu$$ as vector spaces.  Moreover, considered as $S_{n-1}$-modules, we have $$R_{\mu^{(i)}}\cong x_n^{i-1}R_{\mu}/x_n^{i}R_\mu$$  via the map $p\mapsto x_n^{i-1}p$.  It follows that there is an $S_{n-1}$-module decomposition $$R_\mu=\bigoplus_{i=1}^{\mu_1'} R_{\mu^{(i)}}.$$  We therefore can conclude the following.

\begin{lemma}\label{lem:induct}
  Let $\mu\vdash n$ and suppose $\mathcal{C}(\mu^{(i)})$ is a basis of $R_{\mu^{(i)}}$ for each $i=1,2,\ldots,\mu'_1$.  Then $\bigcup x_n^{i-1} \mathcal{C}(\mu^{(i)})$ is a basis of $R_\mu$.
\end{lemma}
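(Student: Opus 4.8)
The plan is to read this off directly from the Garsia--Procesi filtration of $R_\mu$ recalled just above, combined with the elementary fact that a basis of a filtered vector space can be assembled from lifted bases of its successive quotients. Concretely, I would work with the finite descending filtration
\[
R_\mu = x_n^0 R_\mu \supseteq x_n^1 R_\mu \supseteq x_n^2 R_\mu \supseteq \cdots \supseteq x_n^{\mu_1'} R_\mu = 0,
\]
where the vanishing of the last term is part of the Garsia--Procesi result quoted above (it is equivalent to the direct-sum decomposition $R_\mu = \bigoplus_{i=1}^{\mu_1'} x_n^{i-1} R_\mu / x_n^i R_\mu$). The general linear-algebra fact I would invoke is: for a finite filtration $V = F_0 \supseteq F_1 \supseteq \cdots \supseteq F_m = 0$ of a vector space, if for each $j$ we pick a basis of $F_{j-1}/F_j$ and lift each of its elements arbitrarily to $F_{j-1}$, then the union of these lifted sets is a basis of $V$; this is standard, and I would include a one-line induction on $m$ for completeness.

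Next I would identify the successive quotients of this filtration. By the isomorphism of Garsia and Procesi recalled above, multiplication by $x_n^{i-1}$ induces an isomorphism $R_{\mu^{(i)}} \xrightarrow{\sim} x_n^{i-1}R_\mu / x_n^i R_\mu$. Hence the image of the given basis $\mathcal{C}(\mu^{(i)})$ under this map, namely the set of classes of $x_n^{i-1} c$ for $c \in \mathcal{C}(\mu^{(i)})$, is a basis of the quotient $x_n^{i-1} R_\mu / x_n^i R_\mu$. A lift of this basis back into $x_n^{i-1} R_\mu \subseteq R_\mu$ is obtained by multiplying fixed polynomial representatives of the elements of $\mathcal{C}(\mu^{(i)})$ by $x_n^{i-1}$ — that is, precisely $x_n^{i-1}\mathcal{C}(\mu^{(i)})$ as written in the statement. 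Applying the filtration fact, $\bigcup_{i=1}^{\mu_1'} x_n^{i-1}\mathcal{C}(\mu^{(i)})$ is a basis of $R_\mu$.

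The only points needing care — and these are the closest thing to an obstacle, though they are really bookkeeping — are first, that multiplying a representative of $c$ by $x_n^{i-1}$ genuinely represents the class corresponding to $c$ under the Garsia--Procesi isomorphism (this is exactly how that isomorphism is defined, so it is automatic), and second, that the union does not secretly collapse, which follows since the $i$-th set lies in $x_n^{i-1}R_\mu$ and projects to a linearly independent set in the $i$-th quotient. As an alternative to the filtration fact, one can argue by dimension count: the decomposition gives $\dim R_\mu = \sum_i \dim R_{\mu^{(i)}} = \sum_i |\mathcal{C}(\mu^{(i)})|$, so it suffices to check that $\bigcup_i x_n^{i-1}\mathcal{C}(\mu^{(i)})$ spans $R_\mu$. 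This one does by taking an arbitrary $f \in R_\mu$, reducing modulo $x_n R_\mu$ and subtracting a combination of $\mathcal{C}(\mu^{(1)})$, then writing the remainder as $x_n g$, reducing modulo $x_n^2 R_\mu$ and subtracting a combination of $x_n \mathcal{C}(\mu^{(2)})$, and iterating; the process terminates because $x_n^{\mu_1'} R_\mu = 0$.
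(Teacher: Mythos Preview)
Your argument is correct and matches the paper's approach: the paper states the lemma as an immediate consequence of the Garsia--Procesi decomposition $R_\mu=\bigoplus_{i=1}^{\mu'_1} x_n^{i-1}R_{\mu}/x_n^{i}R_\mu$ together with the isomorphisms $R_{\mu^{(i)}}\cong x_n^{i-1}R_{\mu}/x_n^{i}R_\mu$ given by $p\mapsto x_n^{i-1}p$, without writing out a formal proof. You have simply supplied the (standard) filtration/lifting argument that makes this deduction explicit.
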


With this in mind, we outline the following general strategy for proving that $\B_\mu$ is a basis for $R_\mu$.  We assume for induction that $\B_{\lambda}$ is a basis for $R_\lambda$ for all smaller shapes $\lambda$ contained in  $\mu$.  Then, we define $\mathcal{C}_\mu=\bigcup_{i}x_n^{i-1}\B_{\mu^{(i)}}$, which is a basis of $R_\mu$ by the induction hypothesis and Lemma \ref{lem:induct}.  Finally, if we can show that the transition matrix between $\mathcal{B}_\mu$ and $\mathcal{C}_\mu$ is invertible, then the induction is complete. 

We can further simplify this process by noting that we can restrict to basis elements of a given degree.

\begin{definition}
  For any set of homogeneous polynomials $\B$, we write $\B^{(d)}$ to denote the subset of degree $d$ polynomials in $\B$. 
\end{definition}

Note that it suffices to show that the transition matrix between $\mathcal{B}_\mu^{(d)}$ and $\C_\mu^{(d)}$ is invertible for every $d$, since both sets consist of homogeneous polynomials and $R_\mu$ is degree graded.  We will implement this inductive approach for two-row shapes below, by showing that the transition matrix is in fact lower triangular in this case.  We begin by illustrating this phenomenon with an example.

\begin{example}
Consider the case $\mu=(3,3)$ and $d=2$.  Figure \ref{fig:matrix1} shows the transition matrix from $\B_\mu^{(2)}$ to $\C_\mu^{(2)}$.  Here, the elements of $\B_\mu^{(2)}$ are written in last letter order down the left hand side of the table.  The elements of $\C_\mu^{(2)}=\B_{(3,2)}^{(2)}\cup x_6 \B_{(3,2)}^{(1)}$ are written across the top, with the elements from $\B_{(3,2)}^{(2)}$ coming before those of $x_6 \B_{(3,2)}^{(1)}$, with ties broken in last letter order.  If a coefficient is $0$ we leave that entry blank.
 \begin{figure}
     \centering
       \renewcommand{\arraystretch}{1.3}
  $\begin{array}{lccccccccc}
       & F_{\tiny \young(24,135)}^{S'} & F_{\tiny \young(34,125)}^{S'}
       & F_{\tiny \young(25,134)}^{S'}
       & F_{\tiny \young(35,124)}^{S'}
       & F_{\tiny \young(45,123)}^{S'}
       & x_6F_{\tiny \young(2,1345)}^{S''}
       & x_6F_{\tiny \young(3,1245)}^{S''}
       & x_6F_{\tiny \young(4,1235)}^{S''}
       & x_6F_{\tiny \young(5,1234)}^{S''} \\
    F_{\tiny\young(24,1356)}^{S} & 4 &  &  &  &  &  &  &  &  \\
    F_{\tiny\young(34,1256)}^{S} &  & 4 &  &  &  &  &  &  &  \\
    F_{\tiny\young(25,1346)}^{S} &  &  & 4 &  &  &  &  &  &  \\
    F_{\tiny\young(35,1246)}^{S} &  &  &  & 4 &  &  &  &  &  \\
    F_{\tiny\young(45,1236)}^{S} &  &  &  &  & 4 &  &  &  &  \\
    F_{\tiny\young(26,1345)}^{S} & 4/3 &  & 4/3 &  &  & 8/3 &  &  &  \\
    F_{\tiny\young(36,1245)}^{S} &  & 4/3 &  & 4/3 &  &  & 8/3 &  &  \\
    F_{\tiny\young(46,1235)}^{S} & -4/3 & 4/3 &  &  & 4/3 &  &  & 8/3 &  \\
    F_{\tiny\young(56,1235)}^{S} & 4/3 &  & -8/3 & 4/3 & 4/3 &  &  &  & 8/3 \\
  \end{array}$
     \caption{The transition matrix that expresses the elements of $\B_{(3,3)}^{(2)}$ (the row labels) in terms of those of $\C_{(3,3)}^{(2)}=\B_{(3,2)}^{(2)}\cup x_6 \B_{(3,2)}^{(1)}$ (the column labels).}
     \label{fig:matrix1}
 \end{figure}

  Here, $$S={\tiny\young(22,1112)},\hspace{0.5cm} S'={\tiny \young(22,111)},\hspace{0.5cm}S''={\tiny\young(2,1112)}.$$
  
  We will show in the proof of Theorem \ref{thm:two-rows} that, if the largest number $n$ is in the bottom row of $T$, then $F_T^S=cF_{T'}^{S'}$ for some constant $c$, where $T'$ is formed by removing the largest entry $n$ from $T$.  On the other hand, if the largest number $n$ is in the top row of $T$, then we will show that $$
    F_{T}^S=\alpha x_n F_{T'}^{S''} +\beta \sum_{j=b_{d+1}}^{b_{n-d}} F_{T'_j}^{S'} $$
   where $\alpha=\frac{d}{n-2d+1}+d$ and $\beta=\frac{n-d}{n-2d+1}$, and where $T'_j$ is the tableau formed by removing $j$ from the bottom row of $T'$ and inserting it in the top row. Here $n=6$ and $d=2$, so $\alpha=8/3$ and $\beta=4/3$.  Thus, for instance, $$F_{\tiny\young(26,1345)}^{S}=\frac{8}{3} x_6 F_{\tiny\young(2,1345)}^{S''}+\frac{4}{3}F_{\tiny\young(24,135)}^{S'}+\frac{4}{3}F_{\tiny\young(25,134)}^{S'}.$$  Indeed, subtracting the right hand side from the left hand side of the above equation yields the polynomial $$-\frac{8}{3}(x_2-x_1)(x_3+x_4+x_5+x_6)=-\frac{8}{3}(e_2(x_2,x_3,x_4,x_5,x_6)-e_2(x_1,x_3,x_4,x_5,x_6))\in I_\mu.$$
 We similarly have $$F_{\tiny\young(46,1345)}^{S}=\frac{8}{3} x_6 F_{\tiny\young(4,1235)}^{S''}+\frac{4}{3}F_{\tiny\young(43,125)}^{S'}+\frac{4}{3}F_{\tiny\young(45,123)}^{S'}.$$  The second summand is not a basis element, but we can straighten it using the Garnir relations to express it in terms of $F_{T'}^{S'}$ elements where $T'$ is standard, to obtain the second to last row of the matrix above.
\end{example}

The following lemma will be used repeatedly in the proof of Theorem \ref{thm:two-rows}.
 
 \begin{lemma}
   Suppose $\mu$ is a two-row shape and $S$ has content $\mu$.  Then $S$ has at most two rows, say of lengths $d$ and $n-d$.  If $T$ is the tableau of the same shape as $S$ with entries $t_1,\ldots,t_d$ in the top row and $b_1,\ldots,b_{n-d}$ in the bottom, we have \begin{equation}\label{eqn:two-row}
     F_T^S=d!(n-d)!\prod_{i=1}^{d} (x_{t_i}-x_{b_i}).
 \end{equation}
 \end{lemma}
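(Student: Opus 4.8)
The plan is to reduce the statement to a computation of the cocharge word $\cw(S)$, after which the row symmetrizer and column antisymmetrizer making up $\varepsilon_T$ act in an essentially trivial way. To begin I would record the structure of $S$: since $\mu$ has two parts, every entry of $S$ lies in $\{1,2\}$, and since columns are strictly increasing no column can hold more than two boxes, so $\lambda := \shape(S)$ has at most two rows, say with top row of length $d$ and bottom row of length $n-d$. Every length-two column must read $1$ below $2$, so the entire top row of $S$ consists of $2$'s and the first $d$ boxes of the bottom row are $1$'s, the remaining boxes of the bottom row being $1$'s followed by $2$'s. Thus the reading word of $S$ is $w = 2^{d}\,1^{a}\,2^{b}$ for some $a,b \ge 0$ with $a+b = n-d$, and $a \ge d$ since $\mu$ is a partition.

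The crucial step is to show that $\cw_S$ assigns the label $1$ to each box of the top row of $S$ and the label $0$ to each box of the bottom row. I would verify this by running the standard subword decomposition on $w = 2^{d}1^{a}2^{b}$. Since there are no $3$'s, each standard subword has length at most $2$. The first $d$ standard subwords each pick up one of the top-row $2$'s together with one of the $1$'s, and in $w$ that $2$ precedes that $1$, so each such subword equals the permutation $21$, contributing cocharge $1$ and labelling the top-row $2$ with $1$ and its paired $1$ with $0$; here the hypothesis $a \ge d$ guarantees enough $1$'s. Once the top-row $2$'s are exhausted, every subsequent standard subword either pairs a remaining $1$ with a bottom-row $2$ --- which appear in $w$ in the order $1,2$, since all surviving $1$'s lie to the left of all surviving bottom-row $2$'s and the cyclic rule wraps to the rightmost $2$ --- hence equals the permutation $12$ with cocharge $0$, or is a singleton $1$ with cocharge $0$. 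Transferring these labels back to the boxes of $S$ proves the claim (and shows, incidentally, that $\cw_S$ depends only on $\lambda$). Consequently $\mathbf{x}_T^S = \prod_{s \in D(\lambda)} x_{T(s)}^{\cw_S(s)} = x_{t_1} x_{t_2} \cdots x_{t_d}$.

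It then remains to apply $\varepsilon_T = \alpha(C(T))\,\beta(R(T))$. The row group is $R(T) = S_{\{t_1,\dots,t_d\}} \times S_{\{b_1,\dots,b_{n-d}\}}$: the second factor fixes the monomial $x_{t_1}\cdots x_{t_d}$ and contributes a factor $(n-d)!$, while the first factor also fixes it because the monomial is symmetric in $x_{t_1},\dots,x_{t_d}$, contributing $d!$; so $\beta(R(T))\cdot \mathbf{x}_T^S = d!\,(n-d)!\, x_{t_1}\cdots x_{t_d}$. The column group $C(T)$ is generated by the commuting transpositions $s_i = (t_i\ b_i)$ for $i=1,\dots,d$ (the two-box columns), so $\alpha(C(T)) = \prod_{i=1}^{d} (1 - s_i)$; since the pairs $\{t_i,b_i\}$ are pairwise disjoint and no $x_{b_i}$ appears in $x_{t_1}\cdots x_{t_d}$, applying each factor $(1-s_i)$ replaces $x_{t_i}$ by $x_{t_i} - x_{b_i}$, yielding $\prod_{i=1}^{d} (x_{t_i}-x_{b_i})$. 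Combining the two computations gives $F_T^S = d!\,(n-d)!\,\prod_{i=1}^{d}(x_{t_i}-x_{b_i})$.

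I expect the only delicate point to be the bookkeeping inside the standard subword decomposition --- tracking which $1$'s and $2$'s get matched at each stage and handling the cyclic wrap-around once the surviving $1$'s all lie to the left of the surviving $2$'s --- rather than anything conceptually subtle; everything after the identification of $\cw_S$ is a short symmetrizer computation.
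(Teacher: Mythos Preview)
Your proof is correct and follows the same approach as the paper: determine the shape of $S$, compute the cocharge labels (all $0$ on the bottom row, all $1$ on the top), and then apply $\varepsilon_T$ to the resulting monomial $x_{t_1}\cdots x_{t_d}$. The paper's own argument is much terser---it simply displays the form of $S$, asserts the cocharge labels, and says ``the equation follows''---whereas you carry out both the standard-subword bookkeeping and the symmetrizer computation in full; but the route is identical.
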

 
 \begin{proof}
 The tableau $S$ looks like: $$\young(222,111111222)$$ where there are $\mu_1$ $1$'s, $\mu_2$ $2$'s, and exactly $d$ of the $2$'s are in the top row.  Thus the cocharge indices are all $0$ in the first row and $1$ in the second.  The equation follows.
 \end{proof}
 
   We now prove Theorem \ref{thm:two-rows}, which we restate here for the reader's convenience.

{
\renewcommand{\thetheorem}{\ref{thm:two-rows}}
\begin{theorem}
If $\mu=(n-k,k)$ for some $k\ge 0$, the set $$\BBB_\mu=\{F_T^S:(S,T)\in \bigcup_{\lambda\vdash n}\SSYT(\lambda,\mu)\times \SYT(\lambda)\}$$ descends to a higher Specht basis of $R_\mu$.  In other words, Conjecture \ref{conj:R-mu} holds for one- and two-row shapes.
\end{theorem}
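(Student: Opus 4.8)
The plan is to induct on $n$, the size of $\mu$. The one-row base case $\mu=(n)$ is immediate: here $R_{(n)}=\QQ[x_1,\dots,x_n]/\langle x_1,\dots,x_n\rangle\cong\QQ$, and $\BBB_{(n)}$ consists of the single nonzero constant $F_T^S$ coming from the unique tableaux of shape $(n)$. Now let $\mu=(n-k,k)$ with $k\ge1$, so $\mu$ has exactly $\mu_1'=2$ columns. The Garsia--Procesi recursion gives an $S_{n-1}$-module decomposition $R_\mu\cong R_{\mu^{(1)}}\oplus x_nR_{\mu^{(2)}}$, with $\mu^{(1)}=(n-k-1,k)$ and $\mu^{(2)}=(n-k,k-1)$ (reordered into a partition if necessary) and the $i$-th summand equal to $x_n^{i-1}R_\mu/x_n^iR_\mu$. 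Since $\mu^{(1)},\mu^{(2)}$ have at most two rows and $n-1$ boxes, the inductive hypothesis makes $\BBB_{\mu^{(1)}},\BBB_{\mu^{(2)}}$ bases of $R_{\mu^{(1)}},R_{\mu^{(2)}}$, so by Lemma~\ref{lem:induct} the set $\mathcal{C}_\mu:=\BBB_{\mu^{(1)}}\sqcup x_n\BBB_{\mu^{(2)}}$ is a basis of $R_\mu$. It therefore suffices to prove that the transition matrix $M$ expressing $\BBB_\mu$ in terms of the basis $\mathcal{C}_\mu$ is invertible.

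Two facts govern $M$. First, for a two-row $\mu$, each shape $\lambda=(n-d,d)$ with $0\le d\le k$ carries a \emph{unique} semistandard tableau $S_\lambda$ of content $\mu$, so $\BBB_\mu=\{F_T^{S_\lambda}:T\in\SYT(\lambda)\}$, and, by the lemma preceding the theorem, the formula $F_T^{S_\lambda}=d!\,(n-d)!\prod_{i=1}^{d}(x_{t_i}-x_{b_i})$ (with $t_i,b_i$ the top and bottom entries of column $i$) holds for \emph{any} filling $T$ of $\lambda$, extending from standard $T$ by the equivariance $\omega F_T^{S}=F_{\omega T}^{S}$. Second, in a standard $T$ of shape $(n-d,d)$ the entry $n$ lies in a corner, hence at the right end of the bottom or of the top row; deleting it is a bijection from the ``$n$-in-bottom'' tableaux onto the index set of $\BBB_{\mu^{(1)}}$ and from the ``$n$-in-top'' tableaux onto that of $\BBB_{\mu^{(2)}}$, so in particular $|\BBB_\mu|=|\mathcal{C}_\mu|$ and $M$ is square. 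When $n$ ends the bottom row, deleting it leaves the first $d$ columns untouched, so $F_T^{S_\lambda}=(n-d)\,F_{T\setminus n}^{S'}$ as a polynomial identity (here $S'$ is the semistandard tableau of content $\mu^{(1)}$ on the shape of $T\setminus n$); since $F_{T\setminus n}^{S'}$ is itself an element of $\BBB_{\mu^{(1)}}\subseteq\mathcal{C}_\mu$, this row of $M$ contributes a single nonzero entry $n-d$ in the $\BBB_{\mu^{(1)}}$ block and zero in the $x_n\BBB_{\mu^{(2)}}$ block.

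The crux is the case $n$ at the end of the top row, where I will establish the congruence
\[
F_T^{S_\lambda}\;\equiv\;\alpha\,x_n\,F_{T\setminus n}^{S''}\;+\;\beta\!\!\sum_{j=b_{d+1}}^{\,b_{n-d}}\!\!F_{T'_j}^{S'}\pmod{I_\mu},\qquad \alpha=d+\tfrac{d}{n-2d+1}\neq0,\quad \beta=\tfrac{n-d}{n-2d+1},
\]
where $S''$ is the semistandard tableau of content $\mu^{(2)}$ on the shape of $T\setminus n$, and $T'_j$ is $T\setminus n$ with the bottom-row entry $j$ moved to the right end of the top row. Plugging the two-row formula into all three polynomials and collecting terms, the difference of the two sides factors as a nonzero scalar times $\prod_{i=1}^{d-1}(x_{t_i}-x_{b_i})\cdot e_1(W)$, where $W$ is the set of $n-2d+2$ variables complementary to $\{x_{t_1},x_{b_1},\dots,x_{t_{d-1}},x_{b_{d-1}}\}$. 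Applying the telescoping identity $(x_a-x_c)\,e_j(U)=e_{j+1}(U\cup\{x_a\})-e_{j+1}(U\cup\{x_c\})$ — valid when $x_a,x_c\notin U$ — successively for $j=1,2,\dots,d-1$ rewrites this product as a $\pm1$-combination of partial elementary symmetric polynomials $e_d(U)$ with $|U|=n-d+1$; each of these is a generator of $I_\mu$, since the inequality defining $I_\mu$ in \eqref{eq:inequality} asks only that $d>c_{d-1}(\mu)=\min(d-1,k)=d-1$ (true as $d\le k$) and $d\le n-d+1$ (true as $2d\le 2k\le n$). This proves the congruence. Hence such a row of $M$ has the entry $\alpha$ at the column $x_nF_{T\setminus n}^{S''}$ of the $x_n\BBB_{\mu^{(2)}}$ block, while the $\beta\sum F_{T'_j}^{S'}$ contribution — after each $F_{T'_j}^{S'}$ is straightened into $\BBB_{\mu^{(1)}}$ by the Garnir relations of Proposition~\ref{prop:Garnir}, which is a polynomial identity in $\QQ[x_1,\dots,x_{n-1}]$ — stays inside the $\BBB_{\mu^{(1)}}$ block.

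Ordering the rows of $M$ by whether $n$ is in the bottom or the top row of $T$, and the columns as $\BBB_{\mu^{(1)}}$ followed by $x_n\BBB_{\mu^{(2)}}$, the analysis above shows $M=\left(\begin{smallmatrix}A&0\\B&C\end{smallmatrix}\right)$, where (after matching rows to columns via $T\leftrightarrow T\setminus n$) $A$ is diagonal with entries $n-d\neq0$ and $C$ is diagonal with entries $\alpha\neq0$. Thus $M$ is invertible and the induction closes. The main obstacle is the top-row congruence: determining $\alpha,\beta$ precisely and then verifying that the error term lies in $I_\mu$ via the telescoping reduction to generators. Once that identity is in hand, the block-triangular bookkeeping is routine, although one should take a little care, when $n$ is even and $k=n/2$, in checking that $T\mapsto T\setminus n$ maps onto exactly the index sets of $\BBB_{\mu^{(1)}}$ and $\BBB_{\mu^{(2)}}$.
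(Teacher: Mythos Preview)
Your proof is correct and follows essentially the same route as the paper's: induction on $n$ via the Garsia--Procesi decomposition $\mathcal{C}_\mu=\BBB_{\mu^{(1)}}\sqcup x_n\BBB_{\mu^{(2)}}$, the identity $F_T^S=(n-d)F_{T\setminus n}^{S'}$ when $n$ sits in the bottom row, the key congruence with $\alpha=d+\tfrac{d}{n-2d+1}$ and $\beta=\tfrac{n-d}{n-2d+1}$ when $n$ sits in the top row, and the reduction of the error term $P\cdot e_1(W)$ to a signed sum of generators $e_d(U)\in I_\mu$ with $|U|=n-d+1$. Your telescoping identity $(x_a-x_c)e_j(U)=e_{j+1}(U\cup\{x_a\})-e_{j+1}(U\cup\{x_c\})$ is exactly the mechanism behind the paper's signed sum over choices $r_i\in\{t_i,b_i\}$, and your block decomposition $M=\left(\begin{smallmatrix}A&0\\B&C\end{smallmatrix}\right)$ via the position of $n$ is the same block structure the paper obtains (the paper additionally invokes the last letter order within each degree, but this just reproduces the bottom/top split you use directly).
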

\addtocounter{theorem}{-1}
}

\begin{proof}
The base case, $n=1$, holds trivially for the unique partition $\mu=(1)$. 

Let $\mu=(n-k,k)$ and assume for induction that the claim holds for all smaller two-row (or one-row) shapes fitting inside $\mu$.  In particular, it holds for $\mu^{(1)}$ and $\mu^{(2)}$.   Then by Lemma \ref{lem:induct}, the set $$\mathcal{C}_\mu:=\mathcal{B}_{\mu^{(1)}}\cup x_n \mathcal{B}_{\mu^{(2)}}$$ is a basis for $R_\mu$.

Let $t=|\B_\mu|=|\C_\mu|=\binom{n}{\mu}$.  We will show there are total orderings $b_1,\ldots,b_t$ and $c_1,\ldots,c_t$ on $\B_\mu$ and $\C_\mu$ respectively for which $$b_i=\sum_{j\le i}\alpha_{i,j} c_j$$ for some constants $\alpha_{i,j}$ with $\alpha_{i,i}\neq 0$.  Since the transition matrix $[\alpha_{i,j}]$ is lower triangular with a nonzero diagonal, it will follow that $\B_\mu$ is a basis of $R_\mu$.

To define these orderings, first note that the sets $\B_\mu$ and $\C_\mu$ both consist of homogeneous polynomials, and $R_\mu$ is graded by degree.  We therefore can define $b_i<b_j$ if $\deg(b_i)<\deg(b_j)$ and similarly $c_i<c_j$ if $\deg(c_i)<\deg(c_j)$.  With respect to this partial ordering, we have $\alpha_{i,j}=0$ if $i<j$.  Thus it suffices to choose a fixed degree $d$ and consider just the basis elements $b_i$ and $c_i$ of degree $d$, and choose an appropriate total ordering on the corresponding subsets $\B^{(d)}_\mu$ and $\C^{(d)}_\mu$ to show that the corresponding sub-matrix $M^{(d)}$ is lower triangular.

Since the cocharge of a tableau with only $1$'s and $2$'s is equal to the size of the top row, the elements in $\B^{(d)}_\mu$ are precisely those of the form $F_{T}^S$ where $S$ is the unique tableau of shape $\lambda=(n-d,d)$ and content $\mu$, and $T\in \SYT(\lambda)$.   Since $S$ is fixed, we define our ordering based on $T$.  In particular we define $F_{T_1}^S<F_{T_2}^{S}$ if and only if $T_1\lessdot T_2$ in the last letter order.  (See the row ordering of the matrix $M^{(d)}$ in Figure \ref{fig:matrix1} for an example.)

To order the elements of $\C^{(d)}_\mu$, let $S'$ be the unique tableau of content $\mu^{(1)}$ and shape $\lambda^{(1)}=(n-d-1,d)$ (where if $n-d=d$ then $S'$ is undefined), and let $S''$ be the unique tableau of content $\mu^{(2)}$ and shape $\lambda^{(2)}=(n-d,d-1)$.  Then we have 
\begin{align*}
\C^{(d)}_\mu&=\B_{\mu^{(1)}}^{(d)}\cup x_n \B_{\mu^{(2)}}^{(d-1)} \\
&= \{F_{T'}^{S'}\}\cup \{x_nF_{T''}^{S''}\}
\end{align*} where in the first set above $T'\in \SYT(\lambda^{(1)})$ and in the second, $T''\in \SYT(\lambda^{(2)})$.  We enforce that the elements $F_{T'}^{S'}$ come before those of the form $x_nF_{T''}^{S''}$ in our ordering, and then we break ties by the last letter order on the subscripts $T'$ and $T''$ respectively.  (See the column ordering of the matrix $M^{(d)}$ in Figure \ref{fig:matrix1}.)

Now, consider the set $\B_0$ of elements $F_T^S\in \B_\mu^{(d)}$ for which $n$ is in the bottom row of $T$ (so necessarily $d\neq n-d$).  Note that $\B_0$ forms an initial sequence of the total ordering on $\B_{\mu}^{(d)}$.  Removing $n$ from the bottom row of such a tableau $T$ forms a standard tableau $T'$ of shape $\lambda^{(1)}=(n-d-1,d)$.  We claim that $$F_T^S=cF_{T'}^{S'}$$ for some constant $c$.  Indeed, let $t_1,\ldots,t_d$ be the entries in the top row of $T$, and let $b_1,\ldots,b_d$ be the first $d$ entries in the bottom row; then by equation (\ref{eqn:two-row}), both polynomials are nonzero constant multiples of $$(x_{t_1}-x_{b_1})(x_{t_2}-x_{b_2})\cdots (x_{t_k}-x_{b_k}).$$  Thus the sets $\B_0$ and $\B_{\mu^{(1)}}^{(d)}$, which are both initial sequences of their respective orderings, are scalar multiples of one another, and so the transition matrix $M^{(d)}$ is block lower triangular, of the form $$\left(\begin{array}{cc}
    c I & 0 \\
    X & Y
\end{array}\right).$$  It remains to show that $Y$ is lower triangular with nonzero diagonal entries.  We in fact will show that $Y=\alpha I$ for some constant $\alpha$ as well.

Indeed, let $\B_1$ be the set of elements of $\B_\mu^{(d)}$ of the form $F_T^S$ where $n$ is in the top row of $T$.  Let $T$ be such a tableau, with top row having entries $t_1,\ldots,t_d=n$ and bottom row having entries $b_1,\ldots,b_{n-d}$. Define $T''$ to be the tableau formed by deleting $n$ from $T$, and define the tableau $T'_{j}$ for $j\in \{b_{d+1},\ldots,b_{n-d}\}$ to be the tableau formed by deleting $j$ from the bottom row of $T''$ and placing it at the end of the top row.  Note that $T'_j$ may not be standard.  However, since the Garnir relations are satisfied, $F_{T'_j}^{S'}$ is a linear combination of the polynomials $F_{T'}^{S'}$ where $T''$ is standard, which come before elements of the form $x_n F_{T''}^{S''}$ in the ordering on $\C_\mu^{(d)}$. 

We will show that \begin{equation}\label{eqn:two-row2}
    F_{T}^S=\alpha x_n F_{T''}^{S''} +\beta \sum_{j=b_{d+1}}^{b_{n-d}} F_{T'_j}^{S'} 
\end{equation} for some nonzero constants $\alpha$ and $\beta$.  In light of the Garnir relations and the ordering, it will follow that $Y=\alpha I$ as claimed.

To show (\ref{eqn:two-row2}), set $$\alpha=\frac{d}{n-2d+1}+d$$ and $$\beta=\frac{n-d}{n-2d+1}.$$  Then we have, using (\ref{eqn:two-row}) repeatedly:
\begin{align*}
    F_T^S-\alpha x_n F_{T''}^{S''} -\beta \sum_{j=b_{d+1}}^{b_{n-d}} F_{T'_j}^{S'} &= d!(n-d)!\prod_{i=1}^d(x_{t_i}-x_{b_i}) \\ &\phantom{=} -\alpha x_n (d-1)!(n-d)!\prod_{i=1}^{d-1}(x_{t_i}-x_{b_i})
    \\
    &\phantom{=}-\beta \sum_{j=b_{d+1}}^{b_{n-d}}d!(n-d-1)! (x_j-x_{b_{d}})\prod_{i=1}^{d-1}(x_{t_i}-x_{b_i}).
\end{align*}

We wish to show that the right hand side is equal to $0$ in $R_\mu$.  Thus we may divide the right hand side by $(d-1)!(n-d-1)!$, and as a shorthand define $P=\prod_{i=1}^{d-1}(x_{t_i}-x_{b_i})$, so that we wish to show that the simpler expression $$P\cdot\left(d(n-d)(x_{n}-x_{b_d})-\alpha (n-d) x_n-d\beta\sum_{j=b_{d+1}}^{b_{n-d}} (x_{j}-x_{b_d})\right)$$ is $0$ in $R_\mu$, that is, it lies in the ideal $I_\mu$.
In the parenthetical above, substituting $\alpha$ and $\beta$ in for the expressions, it is easily verified that the coefficients of $x_n$, $x_{b_d}$, and each $x_j$ for $j=b_{d+1},\ldots,b_{n-d}$ are all equal to $-d(n-d)/(n-2d+1)$.  Thus the entire expression is a constant multiple of \begin{equation}\label{eqn:simplified-two-row}P\cdot (x_{b_{d}}+x_{b_{d+1}}\cdots+x_{b_{n-d}}+x_n).\end{equation} 

Finally, we show that this expression is in $I_\mu$.  Note that $e_d(X)\in I_\mu$ for any set $X$ of $n-d+1$ variables by the definition of the Tanisaki generators (Equation (\ref{eq:inequality})) and the fact that $\mu$ has two rows, the second of which is at least $d$.  Thus $$e_{d}(\{x_{r_1},\ldots,x_{r_{d-1}}\}\cup \{x_{b_d},x_{b_{d+1}},\ldots,x_{b_{n-d}},x_n\})\in I_\mu$$ for any choice of subscripts in which $r_i$ is either equal to $t_i$ or $b_i$ for each $i=1,\ldots,d-1$.  We assign this partial elementary symmetric function a sign of $-1$ if there are an odd number of $r_i$ subscripts equal to $b_i$, and a sign of $1$ otherwise.  Summing these signed functions yields the expression (\ref{eqn:simplified-two-row}).
\end{proof}

\subsection{Beyond two-row shapes}

In this section, we provide computer evidence that our inductive approach above may be able to be extended to all partition shapes. 

First, Conjecture \ref{conj:R-mu} has been verified using Sage \cite{Sage} for all partition shapes $\mu$ of size at most $7$.  We have also verified it for the three-row shape $(3,3,2)$ of size $8$, which is often the smallest shape in which conjectures related to cocharge start to break down (see, for instance, \cite{Gillespie}, in which a property of cocharge is proven combinatorially for all shapes of the form $(a,b,1^k)$, but the method does not extend to any other three row shapes).

Second, while the transition matrix expressing $\mathcal{B}_\mu^{(d)}$ in terms of $\mathcal{C}_\mu^{(d)}$ is not always lower-triangular for partition shapes $\mu$ having more than two rows, it is very nearly so, in the following sense. 

\begin{definition}
  We say an $n\times n$ matrix $M$ is \defn{almost lower triangular} if there is an upper triangular $n\times n$ matrix $A$ for which $MA$ is lower triangular with nonzero diagonal entries.
\end{definition}

Clearly every invertible lower triangular matrix is almost lower triangular, and every almost lower triangular matrix is invertible.  Computer evidence indicates that there always exist orderings on the sets $\mathcal{B}_\mu^{(d)}$ and $\mathcal{C}_\mu^{(d)}$ such that the transition matrix between them in $R_\mu$ is almost lower triangular.

For example, the transition matrix for $\mu=(3,1,1)$ and $d=2$ is:
$$M=\left(\begin{array}{ccccccccc}
        1 & 0 & 0 & 0 & 0 & 1 & 0 & 0 & 0  \\
        0 & 1 & 0 & 0 & 0 & 0 & 1 & 0 & 0 \\
        0 & 0 & 1 & 0 & 0 & 0 & 0 & 1 & 0 \\
        0 & 0 & 0 & 1 & 0 & 0 & 0 & 0 & 0 \\
        0 & 0 & 0 & 0 & 1 & 0 & 0 & 0 & 0 \\
     -1/2 & 0 & 0 &1/2& 0 & 1 & 0 & 0 & 0 \\
       0 &-1/2& 0 & 0 &1/2& 0 & 1 & 0 & 0 \\
       0 & 0 &-1/2&-1/2&1/2& 0 & 0 & 1 & 0 \\
        1/4 & 1/4 & 1/4 & 0 & 0 & 1/4 & 1/4 & 1/4 & -5/4 
    \end{array}\right)
$$
which is almost lower triangular.  Indeed, multiplying $M$ on the right by the upper triangular matrix $$A=
    \left(\begin{array}{ccccccccc}
        1 & 0 & 0 & 0 & 0 & -1 & 0 & 0 & 0  \\
        0 & 1 & 0 & 0 & 0 & 0 & -1 & 0 & 0 \\
        0 & 0 & 1 & 0 & 0 & 0 & 0 & -1 & 0 \\
        0 & 0 & 0 & 1 & 0 & 0 & 0 & 0 & 0 \\
        0 & 0 & 0 & 0 & 1 & 0 & 0 & 0 & 0 \\
        0 & 0 & 0 & 0 & 0 & 1 & 0 & 0 & 0 \\
        0 & 0 & 0 & 0 & 0 & 0 & 1 & 0 & 0 \\
        0 & 0 & 0 & 0 & 0 & 0 & 0 & 1 & 0 \\
        0 & 0 & 0 & 0 & 0 & 0 & 0 & 0 & 1
    \end{array}\right)$$
yields a lower triangular matrix with nonzero diagonal entries.

\section{A higher Specht basis for $R_{n,k,(n-1)}$}\label{sec:Rnkmu}

We now combine the methods of the previous two sections to prove Theorem \ref{thm:Rnkmu}, which we restate here for the reader's convenience.

{
\renewcommand{\thetheorem}{\ref{thm:Rnkmu}}
\begin{theorem}
 Consider the set of polynomials $$\mathcal{B}_{n,k,(n-1)}=\{F_T^S\cdot e_1^i\}$$ where $F_T^S\in \mathcal{B}_{(n-1,1)}$ is a semistandard higher Specht polynomial for the shape $(n-1,1)$, and $i<k-\des(S)$.  Then $\mathcal{B}_{n,k,(n-1)}$ descends to a higher Specht basis for $R_{n,k,(n-1)}$. 
\end{theorem}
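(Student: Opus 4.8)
The plan is to combine the short-exact-sequence technique from the proof of Theorem~\ref{thm:Rnk} with the two-row result Theorem~\ref{thm:two-rows}, which applies to the shape $(n-1,1)$. The starting point is a commutative-algebra identity. For $\mu=(n-1)$ one has $c_t(\mu)=0$ for every $t\le n-1$, so the partial-elementary generators of $I_{n,k,(n-1)}$ are exactly the $e_r(S)$ with $2\le r\le |S|$ (over all subsets $S$ of the variables), together with the monomials $x_1^k,\dots,x_n^k$. I claim that for $k\ge 2$,
\[
\langle\, I_{n,k,(n-1)},\, e_1 \,\rangle \;=\; I_{(n-1,1)}, \qquad\text{so that}\qquad R_{n,k,(n-1)}/(e_1)\;\cong\;R_{(n-1,1)}.
\]
Both inclusions are routine: the full $e_r$ and the partial $e_r(S)$ ($|S|<n$, $2\le r\le|S|$) generating $I_{n,k,(n-1)}$ already appear among the Tanisaki generators of $I_{(n-1,1)}$, while $x_i^k\in I_{(n-1,1)}$ because $x_i^2 = x_i e_1 - \sum_{j\ne i}x_ix_j\in I_{(n-1,1)}$; conversely $e_1$ and every Tanisaki generator of $I_{(n-1,1)}$ lies in $\langle I_{n,k,(n-1)},e_1\rangle$.

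Next I would establish, for $k\ge 2$, the short exact sequence
\[
0 \longrightarrow R_{n,k-1,(n-1)} \xrightarrow{\;\times e_1\;} R_{n,k,(n-1)} \longrightarrow R_{(n-1,1)} \longrightarrow 0,
\]
where the right-hand map is the projection of the previous paragraph and the left-hand map is multiplication by $e_1$. This map is well defined since $e_1\cdot I_{n,k-1,(n-1)}\subseteq I_{n,k,(n-1)}$: indeed $e_1 x_i^{k-1} = x_i^k + \sum_{j\ne i}x_i^{k-2}\,e_2(x_i,x_j)$, and $e_1 e_r(S)$ is a multiple of a generator. Its image is the kernel $e_1R_{n,k,(n-1)}$ of the projection, so the sequence is exact except possibly at the left; exactness there is equivalent to a dimension equality, which follows from Griffin's Frobenius formula~\eqref{eqn:FrobRnkmu} as specialized in the computation following the statement of this theorem: that computation gives $\dim R_{n,k,(n-1)} = n(k-1)+1$ and $\dim R_{(n-1,1)} = n$, so $\dim R_{n,k-1,(n-1)} = n(k-2)+1 = \dim R_{n,k,(n-1)} - \dim R_{(n-1,1)}$, forcing $\times e_1$ to be injective. (If Griffin's paper already records such a recursion one can cite it instead of reproving it.)

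With the sequence in hand, the argument is an induction on $k$, exactly paralleling the proof of Theorem~\ref{thm:Rnk}. The base case $k=1$ is immediate: $I_{n,1,(n-1)}$ contains every $x_i$, so $R_{n,1,(n-1)}=\mathbb{Q}$, and $\mathcal{B}_{n,1,(n-1)}$ is the single nonzero constant $F_T^S$ with $S$ the shape-$(n)$ tableau of content $(n-1,1)$ (for which $\des(S)=0$). For $k\ge 2$, unravelling the definitions and splitting on the exponent of $e_1$ gives
\[
\mathcal{B}_{n,k,(n-1)} \;=\; \bigl\{\, e_1\cdot g \;:\; g\in\mathcal{B}_{n,k-1,(n-1)} \,\bigr\}\;\sqcup\;\mathcal{B}_{(n-1,1)},
\]
where the $i=0$ part is all of $\mathcal{B}_{(n-1,1)}$ because every $S$ occurring there has $\des(S)\le 1 < k$. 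By the inductive hypothesis $\mathcal{B}_{n,k-1,(n-1)}$ descends to a basis of $R_{n,k-1,(n-1)}$, and by Theorem~\ref{thm:two-rows} (applied to the two-row shape $(n-1,1)$) $\mathcal{B}_{(n-1,1)}$ descends to a basis of $R_{(n-1,1)}$; exactness of the sequence then shows, as in Section~\ref{sec:Rnk}, that $\mathcal{B}_{n,k,(n-1)}$ descends to a basis of $R_{n,k,(n-1)}$.

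Finally, that this basis is a \emph{higher Specht} basis is immediate from the results of Section~\ref{sec:Rmu}: since $e_1$ is $S_n$-invariant, for fixed $S$ and $i$ the span of $\{F_T^S e_1^i : T\in\SYT(\shape(S))\}$ is an $S_n$-submodule which is a homomorphic image of the irreducible module $V^S\cong V^{\shape(S)}$, and it is nonzero (hence a full copy of $V^{\shape(S)}$) because $\mathcal{B}_{n,k,(n-1)}$ is linearly independent. I expect the one genuinely load-bearing step to be the exactness of the short exact sequence, i.e.\ the injectivity of $\times e_1$ on $R_{n,k-1,(n-1)}$; as in the $R_{n,k}$ case this reduces to knowing the Hilbert series of all three modules, which Griffin's formula supplies, after which the numerology collapses. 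The remaining care is in fixing the convention that $\des(S)$ for the semistandard $S$ of content $(n-1,1)$ means the number of descents of its reading word (equivalently, of its standardization), so that $\des(S)=0$ on the shape-$(n)$ tableau and $\des(S)=1$ on the shape-$(n-1,1)$ tableau — which is exactly what makes the bound $i<k-\des(S)$ produce the multiplicities $k$ of $V^{(n)}$ and $k-1$ of $V^{(n-1,1)}$ predicted by $\grFrob(R_{n,k,(n-1)};q)$.
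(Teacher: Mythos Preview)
Your proof is correct and follows essentially the same approach as the paper: induction on $k$ via a short exact sequence with middle term $R_{n,k,(n-1)}$, left term $R_{n,k-1,(n-1)}$ (map $\times e_1$), and right term $R_{(n-1,1)}$, invoking Theorem~\ref{thm:two-rows} for the right term and the same base case $k=1$. The only difference is that the paper obtains the exact sequence by citing Griffin's \cite[Lem.~4.12]{Griffin} (the general sequence $0\to R_{n,k,\mu}\to R_{n,k+1,\mu}\to R_{n,k+1,\mu+(1)}\to 0$, specialized to $\mu=(n-1)$ and using $R_{n,k+1,(n-1,1)}=R_{(n-1,1)}$), whereas you re-derive it directly via the ideal identity $\langle I_{n,k,(n-1)},e_1\rangle=I_{(n-1,1)}$ and a dimension count---which is exactly the contingency you anticipated.
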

\addtocounter{theorem}{-1}
}
\begin{proof}
Since $S_n$ acts trivially on the elementary symmetric function $e_1$, if $\mathcal{B}_{n,k,(n-1)}$ is a basis then it is indeed a higher Specht basis.  In particular, the polynomials $F_T^S\cdot e_1^i$ for a fixed $i$ and for a fixed tableau $S$ of shape $\lambda$ span a copy of the irreducible representation $V^\lambda$ of $S_n$.

To show that $\mathcal{B}_{n,k,(n-1)}$ is a basis,  we make use of a short exact sequence for the modules $R_{n,k,\mu}$ that is analogous to the sequence (\ref{eq:SES}) for $R_{n,k,s}$ used in Section \ref{sec:Rnk}.  Griffin shows \cite[Lem. 4.12]{Griffin} that there is a short exact sequence of $S_n$-modules $$0\to R_{n,k,\mu}\to R_{n,k+1,\mu} \to R_{n,k+1,\mu+(1)}\to 0$$ for any $k<n$ and $\mu$ for which $R_{n,k,\mu}$ is defined.  Here the notation $\mu+(1)$ indicates that we simply add one part of size $1$ to the partition $\mu$.  In the sequence above, the first nontrivial map is multiplication by $e_{n-|\mu|}$ and the second is given by setting $e_{n-|\mu|}=0$.

Setting $\mu=(n-1)$, we have the exact sequences $$0\to R_{n,k,(n-1)}\to R_{n,k+1,(n-1)}\to R_{n,k+1,(n-1,1)}\to 0$$ for any $k\ge 1$.

We now prove the claim by induction on $k$.  For the base case $k=1$, note that we have $R_{n,1,(n-1)}=\mathbb{C}[x_1,\ldots,x_n]/I_{n,1,(n-1)}$, where the ideal $I_{n,1,(n-1)}$ includes all the variables $x_1,\ldots,x_n$ as generators, since $k=1$.  Hence we simply have $R_{n,1,(n-1)}\cong \mathbb{C}$, generated by the single basis element $1$.  The set $\mathcal{B}_{n,1,(n-1)}$ consists of all polynomials $F_T^S\cdot e_1^i$ for which $S$ has content $(n-1,1)$ and $i<1-\des(S)$, which forces $\des(S)=0$ and $i=0$.  The only such tableau $S$ is $$S=\young(1111\cdots  112)$$ which forces $$T=\young(123\cdots n),$$ and these give rise to the unique basis element $F_T^S=1$.

For the induction step, let $k\ge 2$ and assume the claim holds for all smaller $k$.  Note that since $(n-1,1)$ is a partition of $n$, the right-hand module $R_{n,k+1,(n-1,1)}$ of the exact sequence is simply the Garsia-Procesi module $R_{(n-1,1)}$ for any $k\ge 1$.  Hence, by Theorem \ref{thm:two-rows}, a higher Specht basis for this module is given by $\mathcal{B}_{(n-1,1)}$.   

By the induction hypothesis, the left hand term of the exact sequence has $\mathcal{B}_{n,k,(n-1)}$ as a basis.   It follows that the middle term $R_{n,2,(n-1)}$ has basis $e_1\mathcal{B}_{n,k,(n-1)}\cup \mathcal{B}_{(n-1,1)}$.  By the definition of the bases, this is simply equal to $\mathcal{B}_{(n,k+1,(n-1))}$, and the proof is complete.
\end{proof}

\end{document}